\documentclass[reqno, 11pt]{amsart}

\usepackage{amsmath,amsfonts,amssymb,amsthm}
\usepackage{xfrac}

\renewcommand{\geq}{\geqslant}

\newtheorem{theorem}{Theorem}[section]
\newtheorem{lemma}[theorem]{Lemma}
\newtheorem{corollary}[theorem]{Corollary}
\newtheorem{proposition}[theorem]{Proposition}

\theoremstyle{definition}

\newtheorem{example}[theorem]{Example}

\newtheorem{remark}[theorem]{Remark}
\newtheorem{remarks}[theorem]{Remarks}

\newcommand{\fix}{\operatorname{Fix}}
\newcommand{\aut}{\operatorname{Aut}}
\newcommand{\stab}{\operatorname{Stab}}
\newcommand{\bigo}{\operatorname{O}}
\newcommand{\littleo}{\operatorname{o}}

\renewcommand{\geq}{\geqslant}

\newcommand{\nfix}{\mathsf{F}}
\newcommand{\norb}{\mathsf{O}}

\newcommand{\entropy}{\mathsf{h}}
\newcommand{\ugr}{\mathsf{g}}

\title[A dynamical zeta function]{A dynamical zeta function for group actions}
\subjclass[2010]{37A45, 11M41, 37A35, 37C30 (primary), 37C25, 37C35, 37C85, 22F05 (secondary)}

\author{Richard Miles}
\email{richard.miles@math.uu.se}
\thanks{This work was initiated with the support of the London Mathematical Society
Scheme~IV grant number~$41352$, which the author gratefully acknowledges.}
\keywords{Dynamical zeta function, group action, periodic orbit, product formula, natural boundary, full shift}
\begin{document}

\begin{abstract}
This article introduces and investigates the basic features of a dynamical zeta function for group actions, motivated by the classical dynamical zeta function of a single transformation. A product formula for the dynamical zeta function is established that highlights a crucial link between this function and the zeta function of the acting group. A variety of examples are explored, with a particular focus on full shifts and closely related variants. Amongst the examples, it is shown that there are infinitely many non-isomorphic virtually cyclic groups for which the full shift has a rational zeta function. In contrast, it is shown that when the acting group has Hirsch length at least $2$, a dynamical zeta function with a natural boundary is more typical. The relevance of the dynamical zeta function in questions of orbit growth is also considered.
\end{abstract}
\maketitle

\section{Introduction}

The zeta function of a dynamical system is a fundamental invariant that has warranted considerable attention since the definitive work of Artin and Mazur~\cite{MR0176482}. Ruelle~\cite{MR1920859} provides an introduction to various guises of this function, and a comprehensive survey by Sharp in the context of periodic orbits of hyperbolic flows may be found in~\cite{MR2035655}. For a discrete time dynamical system given by a transformation $T:X\rightarrow X$ of a state space~$X$, the dynamical zeta function is defined formally by
\[
\zeta_T(z)=\exp\sum_{n\geqslant 1}\frac{\nfix_T(n)}{n}z^n,
\]
where $\nfix_T(n)=|\{x\in X:T^nx=x\}|$. As well as being an important invariant in its own right, $\zeta_T$ is often related to other dynamical invariants, especially if the dynamical system~$(X,T)$ is framed in a topological or measurable context~\cite{MR0271401},~\cite{MR0176482},~\cite{MR0228014}. For example, if~$T$ is a hyperbolic toral automorphism, then the radius of convergence of~$\zeta_T$ is $\exp(-\entropy)$, where $\entropy$ is the topological entropy. Natural applications involving~$\zeta_T$ may be found in studies of orbit growth~\cite{MR710244},~\cite{MR727704},~\cite{MR1139101}. 

In this article, we introduce and investigate the basic features of a dynamical zeta function for group actions, motivated by the classical dynamical zeta function described above. This programme can be traced back to the pioneering work of Lind~\cite{MR1411232}, who introduced a dynamical zeta function for a~$\mathbb{Z}^d$-action by homeomorphisms of a compact metric space. A compatible definition has been introduced for actions of the infinite dihedral group $\mathbf{D}_\infty$, by Kim, Lee and Park~\cite{MR1978372}, and the definition given here generalizes both these, and the definition used by Artin and Mazur. Notably, Kimoto~\cite{MR2013092} has also considered alternative definitions of zeta functions associated with finite group actions. 

Let~$G$ be a finitely generated group and let~$\mathcal{L}_G$ denote the collection of finite index subgroups of~$G$. We tacitly assume throughout that $G$ is infinite. For any~$L\in\mathcal{L}_G$, write~$[L]=[G:L]$ when the supergroup is clear. Since~$G$ is finitely generated, for all $n\geqslant 1$, the collection of subgroups of index $n$ in $G$ is finite. Denote this collection by $\mathcal{L}_G(n)$ and write $a_G(n)=|\mathcal{L}_G(n)|$. Suppose that~$\alpha$ is a~$G$-action on a set~$X$ such that for all~$L\in\mathcal{L}_G$ the cardinality~$\nfix_\alpha(L)$ of the set of \emph{$L$-periodic points}
\begin{equation}\label{periodic_point_set}
\fix_\alpha(L)=\{x\in X:gx=x\mbox{ for all }g\in L\}
\end{equation}
is finite. The \emph{dynamical zeta function} of~$\alpha$ is defined formally by
\begin{equation}\label{first_action_zeta_equation}
\zeta_\alpha(z)=\exp\sum_{L\in\mathcal{L}_G}\frac{\nfix_\alpha(L)}{[L]}z^{[L]}.
\end{equation}
Note that for an invertible transformation~$T$, the classical dynamical zeta function $\zeta_T$ agrees with the definition above, with $\alpha$ being the $\mathbb{Z}$-action generated by~$T$. 

A central focus in the investigation here is the relationship between~$\zeta_\alpha$ and the zeta function of the acting group~\cite{MR943928} (see also~\cite{MR1978431} and~\cite{MR2371185}), defined by the formal Dirichlet series 
\begin{equation*}\label{first_group_zeta_equation}
\zeta_G(z)=\sum_{n\geqslant 1}\frac{a_G(n)}{n^{z}}.
\end{equation*}
In particular, a key tool will be the Dirichlet series
\begin{equation}\label{exponent_dirichlet_series}
\Delta_{L}(z)=\frac{\zeta_{L}(z+1)}{\zeta(z+1)}=\sum_{n\geqslant 1}\frac{b_{L}(n)}{n^z}, 
\end{equation}
where $L\in\mathcal{L}_G$, and $\zeta=\zeta_\mathbb{Z}$ denotes the Riemann zeta function. Note that~$\Delta_L$ has rational coefficients for all $L\in\mathcal{L}_G$. The relationship between these Dirichlet series and~$\zeta_\alpha$ arises via the development of product formulae for~$\zeta_\alpha$. The classical dynamical zeta function has the well known product formula
\begin{equation}\label{classical_case_product_formula}
\zeta_\alpha(z)=\prod_{Y}(1-z^{|Y|})^{-1},
\end{equation}
where $Y$ runs through all finite orbits of the $\mathbb{Z}$-action~$\alpha$, and this is often regarded as the dynamical analogue of the Euler product  for the Riemann zeta function. In analogy with the Ihara zeta function for a finite connected graph (for example, see~\cite{MR2768284}), one could begin with a product formula as the definition of a dynamical zeta function for a group action. However,~(\ref{classical_case_product_formula}) does not correspond to~(\ref{first_action_zeta_equation}) in general nor, in particular, with the existing studies~\cite{MR1411232} and~\cite{MR1978372}, except in the case~$d=1$ in~\cite{MR1411232}. 

When deriving a product formula formally equivalent to~(\ref{first_action_zeta_equation}), one needs to take into account the various isomorphism classes of stabilizers in $G$, and this introduces a significant level of complexity. 
Before stating the main product formula for~$\zeta_\alpha$, first note that for any finite orbit~$Y$, all stabilizers of elements of~$Y$ lie in the same isomorphism class, for which we may choose an appropriate representative in~$L(Y)\in\mathcal{L}_G$.  Note also that our hypotheses ensure that there are only finitely many orbits of a given cardinality (see Lemma~\ref{moebius_lemma}), so the product given below is properly defined.

\begin{theorem}\label{explicit_product_theorem}
Let~$G$ be a finitely generated group and let~$\alpha$ be an action of~$G$
on a set~$X$ such that~$\nfix_\alpha(L)<\infty$, for all~$L\in\mathcal{L}_G$. Then the dynamical zeta function~$\zeta_\alpha(z)$ is given by the product
\begin{equation}\label{now_is_the_time}
\zeta_\alpha(z)=\prod_{Y}\prod_{n\geqslant 1}(1-z^{|Y|n})^{-b_{L(Y)}(n)},
\end{equation}
where~$Y$ runs over all finite orbits in~$X$,~$L(Y)$ is the stabilizer of any element of~$Y$ and the exponents~$b_{L(Y)}(n)\in\mathbb{Q}$, $n\geqslant 1$, are the coefficients of the Dirichlet series~$\Delta_{L(Y)}$ given by~(\ref{exponent_dirichlet_series}). Hence, if $\Delta_{L}$ has integer coefficients for all~$L\in\mathcal{L}_G$, then the formal power series for~$\zeta_\alpha$ has integer coefficients.
\end{theorem}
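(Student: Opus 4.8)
The plan is to exploit the multiplicativity of $\exp$ to reduce the product formula to a statement about a single finite orbit, and then to identify the resulting single-orbit generating function with one built from the subgroup-counting function of the stabilizer. First I would note that any $L$-periodic point $x$ has stabilizer containing $L$, hence of finite index in $G$, so the orbit of $x$ is finite; thus $\fix_\alpha(L)$ is a disjoint union of finite orbits, and since $\nfix_\alpha(L)<\infty$ only finitely many orbits occur. Hence $\nfix_\alpha(L)=\sum_Y|\fix_\alpha(L)\cap Y|$, summed over finite orbits $Y$, and interchanging the sums — legitimate as formal power series, since for each power $z^m$ only the finitely many $L$ with $[L]=m$ and the finitely many orbits of cardinality dividing $m$ contribute — gives
\[
\zeta_\alpha(z)=\prod_Y\exp\!\left(\sum_{L\in\mathcal L_G}\frac{|\fix_\alpha(L)\cap Y|}{[L]}\,z^{[L]}\right),
\]
so it is enough to treat one finite orbit $Y$.

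Fix such a $Y$ and identify it with $G/H$, where $H=L(Y)$ is the stabilizer of a chosen point. Then $|\fix_\alpha(L)\cap Y|$ is the number of cosets $gH$ with $g^{-1}Lg\subseteq H$, which vanishes unless $|Y|=[G:H]$ divides $[L]$; writing $[L]=|Y|n$ in the nonzero case, one has $[H:g^{-1}Lg]=n$. Counting pairs $(L,gH)$ with $[L]=|Y|n$ and $g^{-1}Lg\subseteq H$ in two ways — for each of the $|Y|$ cosets $gH$, conjugation by $g$ identifies the admissible $L$ with the index-$n$ subgroups of $H$ — yields
\[
\sum_{[L]=|Y|n}|\fix_\alpha(L)\cap Y|=|Y|\,a_{L(Y)}(n),
\]
and therefore $\sum_{L\in\mathcal L_G}\tfrac{|\fix_\alpha(L)\cap Y|}{[L]}z^{[L]}=\sum_{n\geq1}\tfrac{a_{L(Y)}(n)}{n}\,(z^{|Y|})^{n}$.

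It then remains to prove the group-theoretic identity that for any finitely generated group $\Gamma$,
\[
\exp\sum_{n\geq1}\frac{a_\Gamma(n)}{n}\,w^{n}=\prod_{n\geq1}(1-w^{n})^{-b_\Gamma(n)},
\]
where the $b_\Gamma(n)$ are the coefficients of $\Delta_\Gamma$ from (\ref{exponent_dirichlet_series}). Taking logarithms, expanding $-\log(1-w^{n})=\sum_{k\geq1}w^{nk}/k$, and comparing coefficients of $w^{m}$ reduces this to $a_\Gamma(m)=\sum_{n\mid m}n\,b_\Gamma(n)$, which is exactly the Dirichlet-series relation $\zeta_\Gamma(z)=\zeta(z)\,\Delta_\Gamma(z-1)$, i.e. the definition of $\Delta_\Gamma$. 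Applying this with $\Gamma=L(Y)$ and $w=z^{|Y|}$ and substituting into the product over orbits gives~(\ref{now_is_the_time}).

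For the last assertion, if every $\Delta_L$ has integer coefficients then each $b_{L(Y)}(n)\in\mathbb Z$, so each factor $(1-z^{|Y|n})^{-b_{L(Y)}(n)}$ is either a polynomial over $\mathbb Z$ (when $b_{L(Y)}(n)\leq0$) or the binomial series $\sum_{k\geq0}\binom{b+k-1}{k}z^{|Y|nk}$ with $b=b_{L(Y)}(n)\geq0$ — in either case a formal power series over $\mathbb Z$ with constant term $1$ — and since only finitely many factors affect any given coefficient, $\zeta_\alpha\in\mathbb Z[[z]]$. I expect the main obstacle to be the single-orbit bookkeeping: getting the divisibility $|Y|\mid[L]$, the identification of $n$ as the subgroup index $[H:g^{-1}Lg]$, and the multiplicity $|Y|$ in the double count mutually consistent, and verifying that the formal interchanges and the infinite product are well-defined. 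By contrast the group-zeta identity, though it is the conceptual crux responsible for the appearance of $\Delta_L$, is a brief computation once that definition is unwound.
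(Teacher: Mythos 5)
Your proposal is correct and takes essentially the same route as the paper: a factorization of $\zeta_\alpha$ over finite orbits with each orbit factor identified as $\exp\sum_{n\geqslant1}a_{L(Y)}(n)(z^{|Y|})^{n}/n$ (the paper's Lemma~\ref{basic_dynamical_zeta_product_lemma}, proved there via the block system and the conjugation orbit of $L(Y)$ rather than your coset double count), followed by conversion of this exponential into $\prod_{n\geqslant1}(1-z^{|Y|n})^{-b_{L(Y)}(n)}$ using exactly the Dirichlet convolution $a_{L(Y)}(m)=\sum_{d\mid m}d\,b_{L(Y)}(d)$ (the paper via a logarithmic derivative and Lambert series, you by comparing coefficients of logarithms). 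The only quibble is the phrase ``$\fix_\alpha(L)$ is a disjoint union of finite orbits'': since $\fix_\alpha(L)$ need not be $G$-invariant, it is only the disjoint union of its intersections with finite orbits, which is in any case all that your identity $\nfix_\alpha(L)=\sum_Y|\fix_\alpha(L)\cap Y|$ uses.
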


The concluding sentence of the theorem highlights part of the motivation for obtaining a product formula for~$\zeta_\alpha$. The presence of  integer coefficients in the formal power series for~$\zeta_\alpha$ has has proved to be important when studying the analytic behaviour of both the classical dynamical zeta function~\cite{MR3217030}, random dynamical zeta functions~\cite{MR1925634} and Lind's dynamical function~\cite{MR1411232},~\cite{MR3336617},~\cite{miles_ward_zero_dimensional}, since the celebrated P{\'o}lya-Carlson theorem~\cite{MR1544479}, \cite{MR1512473} may then be brought to bear.  It is also worth noting that product formulae for Lind's dynamical zeta function feature prominently in certain studies of higher-dimensional shifts of finite type~\cite{MR3025123}. 

The second main result concerns full shifts of groups having polynomial subgroup growth (PSG groups) and Hirsch length at least $2$ (for a definition of Hirsch length, see Section~\ref{finite_orbit_section}). This is obtained using a simplified form of~(\ref{now_is_the_time}) that we deduce for full shifts (Theorem~\ref{full_shifts_theorem}) coupled with conclusions that we draw (Proposition~\ref{natural_boundary_proposition}) from Shalev's classification of groups with bounded subgroup growth~\cite{MR1475693} and the P{\'o}lya-Carlson theorem. 

\begin{theorem}\label{full_shift_natural_boundary_theorem}
Let $G$ be a polycyclic-by-finite PSG group such that the Dirichlet series~$\Delta_G$ has integer coefficients. If $G$ has Hirsch length at least~$2$, then for the full shift $\sigma(G)$ of $G$ on $\mathcal{A}^G$, where $\mathcal{A}$ is a finite alphabet, the dynamical zeta function~$\zeta_{\sigma(G)}$ has a natural boundary at the circle of convergence $|z|=|\mathcal{A}|^{-1}$.
\end{theorem}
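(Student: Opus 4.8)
The plan is to assemble the statement from three ingredients: the simplified product formula for full shifts (Theorem~\ref{full_shifts_theorem}); the P\'olya--Carlson theorem~\cite{MR1544479},~\cite{MR1512473}, applied to a suitable integer power series attached to $\zeta_{\sigma(G)}$; and Shalev's classification of groups of bounded subgroup growth~\cite{MR1475693}, whose role is to force that power series to be non-rational. The analytic and group-theoretic content is packaged in Proposition~\ref{natural_boundary_proposition}, so the proof of the theorem itself is mostly bookkeeping on top of those two results.

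First I would record the shape of $\zeta_{\sigma(G)}$. For a full shift one has $\nfix_{\sigma(G)}(L)=|\mathcal{A}|^{[L]}$ for every $L\in\mathcal{L}_G$, so that $\log\zeta_{\sigma(G)}(z)=\sum_{n\ge1}\tfrac{a_G(n)}{n}(|\mathcal{A}|z)^n$, and Theorem~\ref{full_shifts_theorem} refines this to the product $\zeta_{\sigma(G)}(z)=\prod_{n\ge1}(1-(|\mathcal{A}|z)^n)^{-b_G(n)}$ whose exponents are the coefficients of $\Delta_G$. Since $G$ is PSG the sequence $a_G(n)$ is polynomially bounded and at least $1$, so the logarithmic series defines an analytic, nowhere-vanishing function on the disc $|z|<|\mathcal{A}|^{-1}$; and because $\Delta_G$ has integer coefficients its Taylor expansion at $0$ has integer coefficients, either via Theorem~\ref{explicit_product_theorem} or directly from the product. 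Differentiating the logarithm gives $z\,\zeta_{\sigma(G)}'(z)/\zeta_{\sigma(G)}(z)=\sum_{n\ge1}a_G(n)(|\mathcal{A}|z)^n=F_G(|\mathcal{A}|z)$, where $F_G(w)=\sum_{n\ge1}a_G(n)w^n$ has integer coefficients and, since $1\le a_G(n)=\bigo(n^{c})$ for some $c$, radius of convergence exactly~$1$.

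The heart of the matter, isolated in Proposition~\ref{natural_boundary_proposition}, is that $|w|=1$ is a natural boundary for $F_G$. By the P\'olya--Carlson theorem it suffices to show $F_G$ is not rational, equivalently that $a_G$ is not eventually quasi-polynomial --- a rational power series with integer coefficients and unit radius of convergence has all its poles on $|w|=1$ at roots of unity, so its coefficient sequence must agree, on each residue class modulo some $N$, with a fixed polynomial for all large indices. This is precisely where Shalev's classification~\cite{MR1475693} enters: a polycyclic-by-finite group of Hirsch length at least~$2$ is not virtually cyclic, hence does not have bounded subgroup growth, and one deduces that for such a $G$ the sequence $a_G(n)$ cannot be eventually quasi-polynomial, the quasi-polynomial (equivalently, bounded) case of subgroup growth among polycyclic-by-finite groups being exactly the virtually cyclic one, for which $\zeta_{\sigma(G)}$ may genuinely be rational. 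I expect this non-rationality step to be the principal obstacle: controlling $a_G(n)$ finely enough to exclude eventual quasi-polynomiality for every $G$ of Hirsch length at least~$2$ is exactly the structural content that Shalev's theorem is invoked to supply.

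Finally I would transfer the natural boundary back to $\zeta_{\sigma(G)}$. Suppose, for contradiction, that $\zeta_{\sigma(G)}$ extended analytically to a neighbourhood of some $z_0$ with $|z_0|=|\mathcal{A}|^{-1}$. Since $\zeta_{\sigma(G)}(0)=1$, it is not identically zero, so $z\,\zeta_{\sigma(G)}'/\zeta_{\sigma(G)}$ would be meromorphic near $z_0$; hence $F_G$ would be meromorphic near the point $|\mathcal{A}|z_0$ of the unit circle, and deleting its finitely many poles in a small disc would give an analytic continuation of $F_G$ across an arc of $|w|=1$. By the P\'olya--Carlson theorem this forces $F_G$ to be rational, contradicting the previous step. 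Therefore $\zeta_{\sigma(G)}$ admits no analytic continuation past any point of $|z|=|\mathcal{A}|^{-1}$; in particular this circle is simultaneously its circle of convergence and a natural boundary, which is the assertion of the theorem.
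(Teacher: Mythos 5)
Your overall architecture (the full-shift formula, a P\'olya--Carlson dichotomy, Shalev's theorem, then a transfer back to $\zeta_{\sigma(G)}$) is close in spirit to the paper, but the object to which you apply P\'olya--Carlson is different, and this is where your argument breaks. The paper applies the dichotomy to the zeta function itself: by Theorem~\ref{full_shifts_theorem}, $\zeta_{\sigma(G)}(z)=\zeta_{\tau(G)}(Az)$, and the hypothesis that $\Delta_G$ has integer coefficients guarantees $\zeta_{\tau(G)}$ has integer coefficients; it then invokes Proposition~\ref{natural_boundary_proposition} for the trivial action, where rationality of $\zeta_{\tau(G)}=\prod_i(1-\xi_iz)/\prod_j(1-\eta_jz)$, together with the facts that $\zeta_{\tau(G)}$ is analytic \emph{and non-vanishing} (being $\exp$ of a convergent series) on the open unit disc, forces $|\xi_i|,|\eta_j|\leqslant 1$ and hence $a_G(n)=\sum_j\eta_j^n-\sum_i\xi_i^n$ to be \emph{bounded}; Shalev's theorem ($a_G(n)=\bigo(n)$ only for virtually cyclic groups) then gives the contradiction. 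You instead apply P\'olya--Carlson to $F_G(w)=\sum_{n\geqslant 1}a_G(n)w^n$. Rationality of $F_G$ with integer coefficients and radius $1$ only yields poles at roots of unity \emph{of possibly high multiplicity}, hence that $a_G(n)$ is eventually quasi-polynomial --- not that it is bounded. Your parenthetical ``quasi-polynomial (equivalently, bounded)'' is false: e.g.\ $1/(1-w)^2$ is rational with integer coefficients and radius $1$ but has linearly growing coefficients, and nothing in your argument or in the cited theorem of Shalev rules out, say, quadratic quasi-polynomial subgroup growth. So the key step --- non-rationality of $F_G$ --- is not established by what you invoke; you would need a separate proof that $a_G(n)$ cannot be eventually quasi-polynomial for polycyclic-by-finite $G$ of Hirsch length at least $2$, which is a genuinely stronger statement than Shalev's bounded-growth (or $\bigo(n)$) classification.

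A symptom of the problem is that your route never uses the hypothesis that $\Delta_G$ has integer coefficients (the coefficients $a_G(n)$ of $F_G$ are integers automatically), so if it worked it would prove the theorem without that assumption; in the paper that hypothesis is exactly what licenses applying P\'olya--Carlson to $\zeta_{\tau(G)}$ itself. Your final transfer step (meromorphy of the logarithmic derivative near a putative continuation point of $\zeta_{\sigma(G)}$, poles being isolated, hence continuation of $F_G$ across a subarc) is fine as far as it goes; the gap is solely in deducing non-rationality of $F_G$ from Shalev. To repair the proof, either supply an argument excluding eventual quasi-polynomiality of $a_G(n)$, or follow the paper's route: work with $\zeta_{\tau(G)}$, where rationality collapses the coefficient sums to bounded differences of power sums of numbers in the closed unit disc, and then Shalev applies directly.
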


We also include a related result (Corollary~\ref{connected_group_corollary}) for actions generated by automorphisms of compact connected abelian groups, obtained by applying the authors previous work~\cite{MR3336617}, that further motivates Theorem~\ref{explicit_product_theorem}.

The examples explored emphasize the role of~$\zeta_G$ and the importance of explicit formulae for~$\zeta_G$ that have been obtained, as for example by Grunewald, Segal and Smith~\cite{MR943928} and du Sautoy, McDermott and Smith~\cite{MR1710163}. Note that the development of such formulae is now quite advanced~\cite{MR2371185}, and we investigate only a small fraction of the many possibilities available here. Our examples show that when the acting group is virtually cyclic,~$\zeta_\alpha$ may in fact be rational and, in contrast with~\cite{MR1411232} and~\cite{MR1978372}, we find that there are infinitely many non-isomorphic groups for which the full shift has a rational dynamical zeta function. Beyond full shifts, the subgroup conjugacy class structure in~$\mathcal{L}_G$ is seen to play a more significant role and this is illustrated with the help of an action $\alpha$ of the planar symmetry group~$\mathbf{pm}\cong\mathbb{Z}^2\rtimes\mathbf{C}_2$ on a projected shift space. This group was chosen because~$\mathcal{L}_\mathbf{pm}$ has a non-trivial partition into finitely many isomorphism classes, $\Delta_L$ has integer coefficients for all $L\in\mathcal{L}_G$, and~$\mathbf{pm}$ has the advantage of being a fundamental expositional example treated in~\cite{MR1710163}. For this action, we find that~$\zeta_\alpha$ may be calculated with the aid of shifted factorials. 

The paper is structured as follows. Section~\ref{finite_orbit_section} assembles some preliminaries concerning finite orbits of group actions and also develops some fundamental properties of the zeta function, including the consequences of the authors earlier paper~\cite{MR3336617} and our deductions based on Shalev's work and the P{\'o}lya-Carlson theorem. Also in this section, in  light of earlier studies by the author and Ward~\cite{MR2465676},~\cite{ETS:9315807}, by using M\"obius inversion on the incidence algebra of $\mathcal{L}_G$, we highlight how the dynamical zeta function may find further applications in studies of orbit growth. In Section~\ref{product_formulae_section}, we develop product formulae for $\zeta_\alpha$, giving the proof of Theorem~\ref{explicit_product_theorem} in two natural stages. Section~\ref{full_shift_section} contains the more detailed investigation of full shifts and related examples. Finally, we conclude with a list of open problems.

\section{Finite orbits and fundamental properties of $\zeta_\alpha$}\label{finite_orbit_section} 

We begin by recalling some important invariants of finitely generated groups. Let $G$ be such a group. When $G$ is polycyclic-by-finite, there is a finite index subgroup $G_m\leqslant G$ with subnormal series
\begin{equation}\label{subnormal_series}
\{1\}=G_0\lhd G_1\lhd G_2\lhd\dots\lhd G_{m},
\end{equation}
where $G_{k}/G_{k-1}$ is infinite cyclic for all $1\leqslant k\leqslant m$. The \emph{Hirsch length} of~$G$ is equal to~$m$ and this is an invariant of~$G$. Let~$\mathcal{L}_G$ denote the collection of finite index subgroups of~$G$, and let~$\mathcal{L}_G(n)$ denote the collection of subgroups of index~$n$. If~$a_G(n)=|\mathcal{L}_G(n)|$ grows at most polynomially in~$n$, then~$G$ is said to have \emph{polynomial subgroup growth}. For example, if~$G$ is nilpotent then~$G$ has polynomial
subgroup growth. When $G$ has polynomial subgroup growth, the Dirichlet series~(\ref{first_group_zeta_equation}) is frequently used as a generating function for the sequence $(a_G(n))$. For further details concerning groups and their subgroup growth properties, see~\cite{MR1978431}.

There is a natural partial order on $\mathcal{L}_G$ given by subgroup inclusion and, 
since~$a_G(n)<\infty$ for all~$n\geqslant 1$, the
poset~$\mathcal{L}_G$ is locally finite, that is each interval 
\[
[L, K]=\{M\in\mathcal{L}_G:L\leqslant M\leqslant K\}
\]
is a finite set. This means that one may define a
corresponding M\"obius function~$\mu$ on the set of intervals by 
\[
\mu(L,L)=1\mbox{ for all }L\in\mathcal{L}_G
\]
and
\[
\mu(L,K)=-\sum_{L\leqslant M<K}\mu(L, M)\mbox{ for all }L<K\mbox{ in }\mathcal{L}_G.
\]
Note that the classical M\"obius function on the group of integers
coincides with the M\"obius function on the set of intervals
in the lattice of finite index subgroups of~$\mathbb{Z}$
on writing~$\mu(n)=\mu(n\mathbb{Z},\mathbb{Z})$.
For further background concerning this combinatorial
framework, see Stanley's book~\cite{MR1442260}.

Let~$\mathcal{O}(\mathcal{L}_G)$ denote the set of orbits that arise from the
natural conjugation action of~$G$ on~$\mathcal{L}_G$
given by~$g\cdot L=gLg^{-1}$,
where~$g\in G$ and~$L\in\mathcal{L}_G$.
This action is clearly trivial if~$G$ is abelian. Note also that conjugate subgroups are isomorphic and have the same index. Since there are only finitely many subgroups of any given index in~$G$, all orbits of this action are finite. 
The cardinality~$|\Omega|$ of an orbit~$\Omega\in\mathcal{O}(\mathcal{L}_G)$ containing a subgroup~$L$ is equal to the index of the normalizer
$N_G(L)=\{g\in G: gLg^{-1}=L\}$ of~$L$, as this is also the stabilizer of~$L$ under the action of subgroup conjugation. 

Now suppose~$\alpha$ is a $G$-action on a set~$X$,
with the property that the cardinality $\nfix_\alpha(L)$ of the set of $L$-periodic points $\fix_\alpha(L)$ given by~(\ref{periodic_point_set})
is finite for all~$L\in\mathcal{L}_G$. We observe the following elementary properties.

\begin{lemma}\label{basic_fixed_point_lemma} Let the group~$G$ and the $G$-action~$\alpha$ be as above.
Then
\begin{enumerate}
\item\label{membership_via_stabilizers} $x\in\fix_\alpha(L)\Leftrightarrow L\leqslant\stab(x)$
\item $g\fix_\alpha(L)=\fix_\alpha(gLg^{-1})$
\item\label{stabilizer_formula} $\stab(gx)=g\stab(x)g^{-1}$
\end{enumerate}
for any~$L\in\mathcal{L}_G$,~$x\in X$ and~$g\in G$.
\end{lemma}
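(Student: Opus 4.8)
The plan is to verify each of the three assertions by unwinding the definition of the periodic point set~(\ref{periodic_point_set}) together with the definition $\stab(x)=\{g\in G:gx=x\}$ of the stabilizer; none of the statements needs more than elementary manipulation, so the only real decision is the order in which to establish them, so that each may lean on its predecessors.

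First I would prove~(\ref{membership_via_stabilizers}). By definition, $x\in\fix_\alpha(L)$ means precisely that $gx=x$ for every $g\in L$, which is exactly the containment $L\subseteq\stab(x)$; since $L$ and $\stab(x)$ are both subgroups of~$G$, this is the relation $L\leqslant\stab(x)$ in~$\mathcal{L}_G$. This step is purely a translation of notation.

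Next I would establish~(\ref{stabilizer_formula}). For $h\in G$ one has $h(gx)=gx$ if and only if $(g^{-1}hg)x=x$, i.e. if and only if $g^{-1}hg\in\stab(x)$, i.e. if and only if $h\in g\,\stab(x)\,g^{-1}$; hence $\stab(gx)=g\,\stab(x)\,g^{-1}$. This uses only that $\alpha$ is an action, so that $g$ and $g^{-1}$ act as mutually inverse bijections of~$X$.

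Finally I would derive~(ii) from~(\ref{membership_via_stabilizers}) and~(\ref{stabilizer_formula}). Since $x\mapsto gx$ is a bijection of~$X$, an element $y$ lies in $g\fix_\alpha(L)$ if and only if $g^{-1}y\in\fix_\alpha(L)$; by~(\ref{membership_via_stabilizers}) this is equivalent to $L\leqslant\stab(g^{-1}y)$, and by~(\ref{stabilizer_formula}) applied with $g^{-1}$ in place of $g$ we have $\stab(g^{-1}y)=g^{-1}\stab(y)g$, so the condition becomes $L\leqslant g^{-1}\stab(y)g$, equivalently $gLg^{-1}\leqslant\stab(y)$, which by~(\ref{membership_via_stabilizers}) again says exactly $y\in\fix_\alpha(gLg^{-1})$. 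Thus $g\fix_\alpha(L)=\fix_\alpha(gLg^{-1})$. There is essentially no hard part here—the content is bookkeeping with conjugation—so the only concern is to present the chain of equivalences cleanly and in the right order.
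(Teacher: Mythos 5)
Your proof is correct: (i) is just the definition of $\fix_\alpha(L)$ rewritten as a containment of subgroups, (iii) is the standard conjugation computation using that $g$ and $g^{-1}$ act as inverse bijections, and your deduction of (ii) from (i) and (iii) is a clean chain of equivalences (one could equally check (ii) directly, since $y\in\fix_\alpha(gLg^{-1})$ means $\ell(g^{-1}y)=g^{-1}y$ for all $\ell\in L$). The paper states the lemma without proof as a list of elementary observations, and your argument is exactly the routine verification it implicitly relies on.
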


By Lemma~\ref{basic_fixed_point_lemma}(\ref{stabilizer_formula}),
the equivalence relation defined by
\[
x\sim y \Longleftrightarrow \stab(x)=\stab(y)
\]
also satisfies, for all~$g\in G$,
\[
x\sim y\Rightarrow gx\sim gy.
\]
This defines a \emph{block system} for the action of~$G$ on~$X$. A finite orbit~$Y\subset X$ is partitioned into \emph{blocks} of the form
\[
B(L)=\{x\in Y:\stab(x)=L\},
\]
for the subgroups~$L\in\mathcal{L}_G$ that occur as stabilizers of elements of~$Y$. So, for any block~$B\subset Y$ and any~$g\in G$, we have either
$gB=B$ or $gB\cap B=\varnothing$.
The blocks comprising~$Y$ have the same cardinality, which is
$|Y|$ divided by the number of distinct subgroups of~$G$ occurring as stabilizers of elements of~$Y$. Therefore, if~$Y$ is an orbit containing an element with stabilizer~$L$, since the action of~$G$ on~$Y$ is transitive, Lemma~\ref{basic_fixed_point_lemma}(\ref{stabilizer_formula}) shows that the number of such distinct subgroups is given by~$|\Omega(L)|$, 
where~$\Omega(L)\in\mathcal{O}(\mathcal{L}_G)$ is the unique orbit containing~$L$.
Clearly~$L\leqslant N_G(L)$ and~$\stab(L)=N_G(L)$. Therefore,~$|\Omega(L)|=[N_G(L)]$ and $[G:N_G(L)][N_G(L):L]=[G:L]$.
In particular,~$|\Omega(L)|$ divides the orbit length~$|Y|=[L]$, where this equality follows from the orbit-stabilizer theorem. Hence, each block forming part of the orbit~$Y$ has cardinality 
\begin{equation}\label{block_size}
[L]/|\Omega(L)|=[L]/[N_G(L)].
\end{equation} 

The following example illustrates how the block system functions for a dynamical system arising from an action of the infinite dihedral group~$\mathbf{D}_\infty$.

\begin{example}\label{block_system_example}
Consider the group $\mathbf{D}_\infty=\langle a,b: b^2=1, ab=ba^{-1}\rangle$.
For each odd index~$n$, there are exactly~$n$ distinct subgroups of~$\mathbf{D}_\infty$ of index~$n$, 
\begin{equation}\label{subgroup_list}
\langle a^n, b\rangle,\langle a^n, ab\rangle,\langle a^n, a^2b\rangle,\dots,\langle a^n, a^{n-1}b\rangle, 
\end{equation}
and for each even index~$n$, in addition to the subgroups (\ref{subgroup_list}), there is one additional subgroup~$\langle a^{n/2}\rangle$ of index~$n$. An action~$\alpha$ of $\mathbf{D}_\infty$ on the torus $\mathbb{T}^2=(\mathbb{R}/\mathbb{Z})^2$ is generated by multiplication by the matrices
\[
A=
\begin{pmatrix}
-2 & 3\\
1&-2
\end{pmatrix}
\mbox{ and }
B=
\begin{pmatrix}
7 & -12\\
4&-7
\end{pmatrix}
\]
which satisfy $B^2=1$ and $AB=BA^{-1}$. A method for calculating periodic points for this example is explained in~\cite[Ex.~2.2]{ETS:9315807}. Here, it is shown how the finite orbits of this action intersect~$\fix_\alpha(L)$ for the particular subgroup $L=\langle a^6, b\rangle$, according to the block system just described. The subgroup~$L$ has  conjugates~$\langle a^6, a^2 b\rangle$ and $\langle a^6, a^4 b\rangle$ and supergroups $\langle a^3, b\rangle$, $\langle a^2, b\rangle$ and~$\langle a, b\rangle$. Amongst the supergoups, only $\langle a^3, b\rangle$ has distinct conjugates, namely $\langle a^3, ab\rangle$ and $\langle a^3, a^2b\rangle$. All the subgroups just mentioned occur as stabilizers of points in the finite orbits of~$\alpha$ that intersect $\fix_\alpha(L)$. These~$7$ orbits $Y_1,\dots,Y_7$ are illustrated by the rows in Table~\ref{block_system_diagram}. Stabilizers of points are shown above them, and the set~$\fix_\alpha(L)$ comprises the~$12$ points in the final column of the table. Individual cells in the table containing the periodic points correspond to blocks in the block system for~$\alpha$.
\begin{center}
\begin{table}[h]
\caption{Orbits intersecting $\fix_\alpha(\langle a^6, b\rangle)$ in Example~\ref{block_system_example}.\label{block_system_diagram}}
\begingroup
\renewcommand{\arraystretch}{1.15}
\begin{tabular}{c|c|c|c|}
\cline{2-4}
Stabilizer&$\langle a^6, a^4b\rangle$&$\langle a^6, a^2b\rangle$&$\langle a^6, b\rangle$
\\ \cline{2-4}
$Y_7$ 
&
$\begin{pmatrix}\sfrac{2}{30}\\ \sfrac{29}{30}\end{pmatrix}$
$\begin{pmatrix}\sfrac{23}{30}\\ \sfrac{26}{30}\end{pmatrix}$
&
$\begin{pmatrix}\sfrac{29}{30}\\ 0\end{pmatrix}$
$\begin{pmatrix}\sfrac{26}{30}\\ \sfrac{15}{30}\end{pmatrix}$
&
$\begin{pmatrix}\sfrac{2}{30}\\ \sfrac{1}{30}\end{pmatrix}$
$\begin{pmatrix}\sfrac{23}{30}\\ \sfrac{4}{30}\end{pmatrix}$
\\ \cline{2-4} 
$Y_6$ 
&
$\begin{pmatrix}\sfrac{4}{30}\\ \sfrac{28}{30}\end{pmatrix}$
$\begin{pmatrix}\sfrac{16}{30}\\ \sfrac{22}{30}\end{pmatrix}$
&
$\begin{pmatrix}\sfrac{28}{30}\\ 0\end{pmatrix}$
$\begin{pmatrix}\sfrac{22}{30}\\ 0\end{pmatrix}$
&
$\begin{pmatrix}\sfrac{4}{30}\\ \sfrac{2}{30}\end{pmatrix}$
$\begin{pmatrix}\sfrac{16}{30}\\ \sfrac{8}{30}\end{pmatrix}$
\\ \cline{2-4} 
$Y_5$
&
$\begin{pmatrix}\sfrac{6}{30}\\ \sfrac{27}{30}\end{pmatrix}$
$\begin{pmatrix}\sfrac{9}{30}\\ \sfrac{18}{30}\end{pmatrix}$
&
$\begin{pmatrix}\sfrac{27}{30}\\ 0\end{pmatrix}$
$\begin{pmatrix}\sfrac{18}{30}\\ \sfrac{15}{30}\end{pmatrix}$
&
$\begin{pmatrix}\sfrac{6}{30}\\ \sfrac{3}{30}\end{pmatrix}$
$\begin{pmatrix}\sfrac{9}{30}\\ \sfrac{12}{30}\end{pmatrix}$
\\ \cline{2-4} 
$Y_4$
&
$\begin{pmatrix}\sfrac{8}{30}\\ \sfrac{26}{30}\end{pmatrix}$
$\begin{pmatrix}\sfrac{2}{30}\\ \sfrac{14}{30}\end{pmatrix}$
&
$\begin{pmatrix}\sfrac{26}{30}\\ 0\end{pmatrix}$
$\begin{pmatrix}\sfrac{14}{30}\\ 0\end{pmatrix}$
&
$\begin{pmatrix}\sfrac{8}{30}\\ \sfrac{4}{30}\end{pmatrix}$
$\begin{pmatrix}\sfrac{2}{30}\\ \sfrac{16}{30}\end{pmatrix}$\\
\cline{2-4}
Stabilizer&$\langle a^3, ab\rangle$&$\langle a^3, a^2b\rangle$&$\langle a^3, b\rangle$
\\ \cline{2-4} 
$Y_3$ 
&
$\begin{pmatrix}\sfrac{20}{30}\\ \sfrac{20}{30}\end{pmatrix}$
&
$\begin{pmatrix}\sfrac{20}{30}\\ 0\end{pmatrix}$
&
$\begin{pmatrix}\sfrac{20}{30}\\ \sfrac{10}{30}\end{pmatrix}$
\\ \cline{2-4} 
\multicolumn{3}{r|}{Stabilizer}&$\langle a^2, b\rangle$
\\ \cline{4-4} 
\multicolumn{3}{r|}{$Y_2$}
&
$\begin{pmatrix}0\\ \sfrac{15}{30}\end{pmatrix}$
$\begin{pmatrix}\sfrac{15}{30}\\ 0\end{pmatrix}$
\\  \cline{4-4}
\multicolumn{3}{r|}{Stabilizer}&$\langle a, b\rangle$
\\ \cline{4-4} 
\multicolumn{3}{r|}{$Y_1$}
&
$\begin{pmatrix}0\\ 0\end{pmatrix}$
\\  \cline{4-4}
\end{tabular}
\endgroup
\end{table}
\end{center}
\end{example}

Given any~$L\in\mathcal{L}_G$, write~$\norb_\alpha(L)$ for the number of orbits of the action~$\alpha$ on~$X$ that contain at least one element~$x$ with~$\stab(x)=L$. The following lemma connects~$\norb_\alpha(L)$ and~$\nfix_\alpha(L)$. 
\begin{lemma}\label{moebius_lemma}
Let~$G$ be a finitely generated group and let~$\alpha$ be a~$G$-action
 a set~$X$ such that for all~$L\in\mathcal{L}_G$,~$\nfix_\alpha(L)<\infty$.
Then, for any~$L\in\mathcal{L}_G$,
\begin{enumerate}
\item\label{subgroup_fixed_point_formula}
$\displaystyle\nfix_\alpha(L)=\sum_{K\geq L}\frac{[K]}{[N_G(K)]}\norb_\alpha(K)$,
\item\label{inversion_formula}
$\displaystyle\norb_\alpha(L) = \frac{[N_G(L)]}{[L]}\sum_{K\geq L}\mu(L,K)\nfix_\alpha(K)$.
\end{enumerate}
\end{lemma}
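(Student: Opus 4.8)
The plan is to establish part~(\ref{subgroup_fixed_point_formula}) directly, by partitioning $\fix_\alpha(L)$ according to the block system described above, and then to deduce part~(\ref{inversion_formula}) from it by M\"obius inversion.

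For part~(\ref{subgroup_fixed_point_formula}), the starting point is Lemma~\ref{basic_fixed_point_lemma}(\ref{membership_via_stabilizers}): a point $x\in X$ lies in $\fix_\alpha(L)$ precisely when $L\leqslant\stab(x)$. Hence
\[
\fix_\alpha(L)=\bigsqcup_{K\geqslant L}B(K),
\]
the union running over those $K\in\mathcal{L}_G$ with $K\geqslant L$. First I would record that this is a genuinely finite disjoint union with only finitely many nonempty terms: if $L$ has finite index then every $K$ with $L\leqslant K\leqslant G$ has $[K]$ dividing $[L]$, so the interval $[L,G]$ is a finite subset of $\mathcal{L}_G$, and in any case each $B(K)\subseteq\fix_\alpha(L)$ is finite. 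Next, for each such $K$, I would compute $|B(K)|$ as follows: by definition $B(K)$ meets exactly the $\norb_\alpha(K)$ orbits that contain some element with stabilizer equal to $K$; each such orbit $Y$ has cardinality $|Y|=[K]$ by the orbit--stabilizer theorem (stabilizers within an orbit are conjugate, hence of equal index), and $Y$ meets $B(K)$ in a single block, which by~(\ref{block_size}) has cardinality $[K]/[N_G(K)]$. Summing over the contributing orbits gives $|B(K)|=\frac{[K]}{[N_G(K)]}\norb_\alpha(K)$, and therefore
\[
\nfix_\alpha(L)=\sum_{K\geqslant L}|B(K)|=\sum_{K\geqslant L}\frac{[K]}{[N_G(K)]}\norb_\alpha(K),
\]
which is part~(\ref{subgroup_fixed_point_formula}).

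For part~(\ref{inversion_formula}), set $f(L)=\nfix_\alpha(L)$ and $g(K)=\frac{[K]}{[N_G(K)]}\norb_\alpha(K)$, so that part~(\ref{subgroup_fixed_point_formula}) reads $f(L)=\sum_{K\geqslant L}g(K)$ for every $L\in\mathcal{L}_G$. Since the upper interval $[L,G]$ is finite for every $L$, all sums here are finite and the M\"obius inversion formula on the locally finite poset $\mathcal{L}_G$ (see~\cite{MR1442260}) yields $g(L)=\sum_{K\geqslant L}\mu(L,K)f(K)$, that is,
\[
\frac{[L]}{[N_G(L)]}\norb_\alpha(L)=\sum_{K\geqslant L}\mu(L,K)\nfix_\alpha(K).
\]
Multiplying through by $[N_G(L)]/[L]$ gives part~(\ref{inversion_formula}).

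The step requiring the most care is the count of $|B(K)|$: one must be sure that, within an orbit $Y$ whose point stabilizers exhaust the conjugacy class of $K$, the set of points whose stabilizer is \emph{exactly} $K$ (and not merely conjugate to $K$) is a single block of the size recorded in~(\ref{block_size}), and that distinct contributing orbits contribute disjoint blocks to $B(K)$. Both points are supplied by the block-system analysis preceding the lemma, together with the identities $|\Omega(K)|=[N_G(K)]$ and $[G:N_G(K)][N_G(K):K]=[G:K]$; once these are in hand, the remainder of the argument is routine.
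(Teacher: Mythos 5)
Your proof is correct and takes essentially the same route as the paper: part~(\ref{subgroup_fixed_point_formula}) from the block decomposition of $\fix_\alpha(L)$ via Lemma~\ref{basic_fixed_point_lemma}(\ref{membership_via_stabilizers}) and the block size~(\ref{block_size}), and part~(\ref{inversion_formula}) by the dual form of M\"obius inversion on the locally finite poset $\mathcal{L}_G$. The finiteness checks and the careful count of the points with stabilizer exactly $K$ that you spell out are precisely the details the paper leaves implicit.
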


\begin{proof}
Orbits of the $G$-action $\alpha$  on $X$ intersect~$\fix_\alpha(L)$ in blocks, and~(\ref{subgroup_fixed_point_formula}) follows immediately from Lemma~\ref{basic_fixed_point_lemma}(\ref{membership_via_stabilizers}) and~(\ref{block_size}). Consequently,~(\ref{inversion_formula}) is obtained using the dual form of M\"obius inversion~\cite[Prop.~3.7.2]{MR1442260}.
\end{proof}

\begin{remark}
Lemma~\ref{moebius_lemma} corrects an earlier oversight by the author and Ward in their work on orbit counting, specifically in~\cite[Eq.~(2)]{MR2465676} and~\cite[Eq.~(2)]{ETS:9315807}, where~$\norb_\alpha(L)$ is defined as it may be more simply when~$G$ is abelian, and the factor $[N_G(L)]$ is omitted. However, the main results in~\cite{MR2465676} and~\cite{ETS:9315807} remain unaffected because this missing factor is cancelled in the principal orbit counting formula, derived here with the appropriate correction for the non-abelian case in Proposition~\ref{principal_orbit_counting_proposition}.
\end{remark}

Recall the definition of~$\zeta_\alpha$ given by~(\ref{first_action_zeta_equation}). It is also sometimes useful to write 
\[
\zeta_\alpha(z)=
\exp\sum_{n\geqslant 1}\frac{1}{n}\sum_{L\in\mathcal{L}_G(n)}\nfix_\alpha(L)z^n.
\]
Note that the formal power series for $\zeta_\alpha$ is thus obtained by composition of generating functions in the usual way. 

If~$\alpha$ and~$\beta$ are actions of a finitely generated group~$G$ on sets~$X$ and~$Y$ respectively and there is a bijection~$\phi:X\rightarrow Y$ such that for all~$g\in G$ and all~$x\in X$,
\[
\phi(gx)=g\phi(x),
\]
then, for all~$L\in\mathcal{L}_G$,~$\nfix_\beta(L)=\nfix_\alpha(L)$. Hence,~$\zeta_\alpha$ and~$\zeta_\beta$ are identical. Thus, the dynamical zeta function is an invariant of bijective conjugacy.

If $\alpha$ is an action of a finitely generated PSG group $G$ on a set $X$, we define
\begin{equation}\label{ugr_definition}
\ugr(\alpha)=
\limsup_{[L]\rightarrow\infty}
\frac{1}{[L]}
\log\nfix_\alpha(L)
=\lim_{n\rightarrow\infty}
\sup_{[L]\geqslant n}
\frac{1}{[L]}
\log\nfix_\alpha(L).
\end{equation}
Since~$G$ has polynomial subgroup growth, if~$\nfix_\alpha(L)<\infty$ for all~$L\in\mathcal{L}_G$, then the usual Hadamard formula shows that~$\zeta_\alpha$ has radius of convergence~$\exp(-\ugr(\alpha))$.  

A $G$-action on a compact metric space $(X,\rho)$ is called \emph{expansive} if there exists $r>0$ such that $\sup_{g\in G}\rho(gx,gy)\geqslant r$ for all distinct $x,y\in X$. In the classical setting of a topological dynamical system where~$\alpha$ is generated by a single expansive homeomorphism of a compact metric space, Bowen~\cite{MR0399413} shows that there exist constants $C_1,C_2>0$ such that for sufficiently large~$n$,
\begin{equation}\label{bowens_inequalities}
C_1\exp(\entropy(\alpha)n)\leqslant\nfix_\alpha(\langle n \rangle)\leqslant C_2\exp(\entropy(\alpha)n),
\end{equation}
where $\entropy(\alpha)$ is the topological entropy.  Hence, $\ugr(\alpha)=\entropy(\alpha)$. For non-expansive $\mathbb{Z}$-actions, periodic point counting estimates so closely tied to entropy should not be expected (for example, see~\cite{MR684244}). Moreover, for more general group actions, even with the assumption of expansiveness, there is little reason to expect a direct relationship between $\ugr(\alpha)$ and~$\entropy(\alpha)$. For instance, Lind~\cite{MR1411232} gives an example of an expansive $\mathbb{Z}^2$-action~$\alpha$ by continuous automorphisms of a compact abelian group such that $\ugr(\alpha)=\log 4$ and $\entropy(\alpha)=\log 3$. Another particular case for which $\ugr(\alpha)=\entropy(\alpha)$, is that of an expansive action~$\alpha$ of $\mathbf{D}_\infty$ by continuous automorphisms of a compact abelian group, considered by the author in~\cite{ETS:9315807}. This is achieved by obtaining 
bounds analogous to~(\ref{bowens_inequalities}) for all subgroups of index $n$. However, in Section~\ref{full_shift_section}, it will be seen that no such analogous result is possible for expansive actions of all other virtually cyclic groups (that is, those that are neither isomorphic to $\mathbb{Z}$ nor~$\mathbf{D}_\infty$) when $\entropy(\alpha)>0$.

In~\cite{MR3336617}, it is shown that a mixing $\mathbb{Z}^2$-action $\alpha$ by automorphisms of compact connected finite dimensional abelian group~$X$ satisfies $\ugr(\alpha)=0$. Using this result as a basis, we see that a similar result holds much more generally. For an algebraic characterization of mixing, see~\cite{MR1345152}. In the context of the theorem, it ensures $\nfix_\alpha(L)<\infty$ for all~$L\in\mathcal{L}_G$.
 
\begin{theorem}\label{slow_growth_rate_theorem}
Let $\alpha$ be a mixing action of a polycyclic-by-finite PSG group~$G$ by automorphisms of a compact connected finite dimensional abelian group~$X$. If $G$ has Hirsch length at least~$2$, then $\ugr(\alpha)=0$.
\end{theorem}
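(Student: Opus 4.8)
The plan is to reduce the statement to the $\mathbb{Z}^2$ case already established in~\cite{MR3336617}. The first step is to find a copy of~$\mathbb{Z}^2$ sitting inside~$G$. Since~$G$ is polycyclic-by-finite it has a finite index subgroup~$G_m$ carrying a subnormal series $\{1\}=G_0\lhd G_1\lhd\dots\lhd G_m$ with infinite cyclic successive quotients, and $m$, the Hirsch length of~$G$, is at least~$2$. The subgroup~$G_2$ is an extension of $G_1\cong\mathbb{Z}$ by $G_2/G_1\cong\mathbb{Z}$; as $\aut(\mathbb{Z})=\{\pm1\}$, for suitable generators~$a$ of~$G_1$ and~$t$ of~$G_2$ lying over a generator of~$G_2/G_1$ we have $tat^{-1}=a^{\pm1}$, so $t^2$ commutes with~$a$, and since~$G_2$ is torsion-free with~$a$ and~$t^2$ having independent images in~$G_2/G_1$ a short computation gives $H:=\langle a,t^2\rangle\cong\mathbb{Z}^2$. (This is a standard fact about polycyclic groups; one could alternatively quote it from~\cite{MR1978431}.) Crucially, $H$ need not have finite index in~$G$.

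Next I would restrict~$\alpha$ to~$H$. Restricting automorphisms to a subgroup gives automorphisms, $X$ is unchanged, and because~$H$ is infinite, an element of~$H$ that eventually leaves every finite subset of~$H$ also eventually leaves every finite subset of~$G$; hence mixing of~$\alpha$ passes to the restriction~$\alpha|_H$. Thus~$\alpha|_H$ is a mixing $\mathbb{Z}^2$-action by automorphisms of the compact connected finite dimensional abelian group~$X$, and~\cite{MR3336617} gives $\ugr(\alpha|_H)=0$. In particular $\nfix_\alpha(M)<\infty$ for every $M\in\mathcal{L}_H$, and since (see below) $\nfix_\alpha(L)\leqslant\nfix_\alpha(L\cap H)$ for $L\in\mathcal{L}_G$, this also re-confirms that~$\ugr(\alpha)$ is well defined.

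The last step is to transfer the growth estimate from~$H$ to~$G$. Given $L\in\mathcal{L}_G$, set $M=L\cap H\in\mathcal{L}_H$. From $M\leqslant L$ one has $\fix_\alpha(L)\subseteq\fix_\alpha(M)$, hence $\nfix_\alpha(L)\leqslant\nfix_\alpha(M)$, and the injection $H/M\hookrightarrow G/L$ gives $[H:M]\leqslant[L]$; since $\nfix_\alpha(L)\geqslant1$ these combine to
\[
\frac{1}{[L]}\log\nfix_\alpha(L)\leqslant\frac{1}{[H:M]}\log\nfix_\alpha(M).
\]
Fix $\varepsilon>0$. By $\ugr(\alpha|_H)=0$ there is $m_0\geqslant2$ with $\frac{1}{[H:M']}\log\nfix_\alpha(M')<\varepsilon$ for all $M'\in\mathcal{L}_H$ with $[H:M']\geqslant m_0$; put $C=\max\{\nfix_\alpha(M'):M'\in\mathcal{L}_H,\ [H:M']<m_0\}<\infty$. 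For $L\in\mathcal{L}_G$: if $[H:M]\geqslant m_0$ the displayed inequality already gives a bound $<\varepsilon$; if $[H:M]<m_0$ then $\frac{1}{[L]}\log\nfix_\alpha(L)\leqslant\frac{\log C}{[L]}$, which is $<\varepsilon$ once $[L]>\varepsilon^{-1}\log C$. Hence $\limsup_{[L]\to\infty}\frac{1}{[L]}\log\nfix_\alpha(L)\leqslant\varepsilon$ for every $\varepsilon>0$, and as this quantity is nonnegative, $\ugr(\alpha)=0$.

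The only substantive input is the $\mathbb{Z}^2$ result of~\cite{MR3336617}; everything else is soft. The one point needing care is that~$H$ is typically of infinite index in~$G$, so the comparison in the final step does not by itself force $\frac{1}{[L]}\log\nfix_\alpha(L)\to0$ (the index $[H:L\cap H]$ can stay bounded while $[L]\to\infty$); this is repaired by the case split above, using that~$H$ has only finitely many subgroups of index below~$m_0$ and each has finitely many periodic points. Checking that mixing restricts to the subgroup is, as noted, immediate from the definition.
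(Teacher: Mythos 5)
Your argument is correct and is essentially the paper's own proof: extract $H\cong\mathbb{Z}^2$ from the subnormal series using $\aut(\mathbb{Z})\cong\mathbf{C}_2$, observe that mixing passes to the subaction $\beta=\alpha|_H$, apply \cite[Th.~1.1]{MR3336617} to get $\ugr(\beta)=0$, and compare via $\nfix_\alpha(L)\leqslant\nfix_\beta(H\cap L)$ together with $[H:H\cap L]\leqslant[G:L]$. The only difference is that the paper deduces $\ugr(\alpha)\leqslant\ugr(\beta)$ directly from this pointwise inequality, whereas your $\varepsilon$--$m_0$ case split explicitly treats the subgroups $L$ for which $[H:H\cap L]$ stays bounded while $[G:L]\to\infty$ (using that $H$ has only finitely many subgroups of each index, each with finitely many periodic points); this point is left implicit in the paper, and your extra care is sound.
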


\begin{proof}
Firstly, since $\alpha$ is mixing, any non-trivial subaction is also mixing. Consider a subnormal series of $G$ of the form~(\ref{subnormal_series}), so that $G_2\cong \mathbb{Z}\rtimes\mathbb{Z}$. Since $\aut(\mathbb{Z})\cong\mathbf{C}_2$, a standard argument shows that this semi-direct product contains a subgroup isomorphic to~$\mathbb{Z}^2$, so $G$ contains a subgroup~$H$ isomorphic to~$\mathbb{Z}^2$, and the subaction $\beta$ of $H$ on~$X$ is also mixing. Furthermore, for any~$L\in\mathcal{L}_G$, $[H:H\cap L]\leqslant [G:L]$ and 
\[
\nfix_\alpha(L)\leqslant \nfix_\alpha(H\cap L)=\nfix_\beta(H\cap L).
\]
Therefore,
\[
\frac{1}{[G:L]}
\log\nfix_\alpha(L)
\leqslant
\frac{1}{[H:H\cap L]}
\log\nfix_\beta(H\cap L)
\]
and so $\ugr(\alpha)\leqslant\ugr(\beta)$. Moreover, since~$\beta$ is mixing,~\cite[Th.~1.1]{MR3336617} shows that
\[
\ugr(\beta)=
\limsup_{[H:K]\rightarrow\infty}
\frac{1}{[H:K]}
\log\nfix_\beta(K)
=
0,
\]
as $K$ runs through finite index subgroups of $H$.
\end{proof}

When $\ugr(\alpha)=0$, an application of the P{\'o}lya-Carlson theorem~\cite[Ch.~6, Th.~5.3]{MR2376066} together with Shalev's classification of groups with bounded subgroup growth~\cite{MR1475693} leads to the following results.

\begin{proposition}\label{natural_boundary_proposition}
Let $\alpha$ be an action of a polycyclic-by-finite PSG group~$G$ of Hirsch length at least~$2$ on a set $X$ such that  $\nfix_\alpha(L)=0$ for at most finitely many $L\in\mathcal{L}_G$. If $\ugr(\alpha)=0$ and the formal power series for $\zeta_\alpha$ has integer coefficients, then $\zeta_\alpha$ has the unit circle as a natural boundary.
\end{proposition}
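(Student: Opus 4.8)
The plan is to apply the P\'olya--Carlson theorem to reduce the statement to showing that $\zeta_\alpha$ is not rational, and then to contradict the assumption on the Hirsch length of~$G$. Since $\ugr(\alpha)=0$, the Hadamard formula recalled above shows that $\zeta_\alpha$ has radius of convergence $\exp(-\ugr(\alpha))=1$, so $\zeta_\alpha$ is holomorphic on the open unit disc and, by hypothesis, has a power series with integer coefficients. The P\'olya--Carlson theorem then says that $\zeta_\alpha$ is either rational or has the unit circle as a natural boundary, so it suffices to rule out rationality.

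Suppose, then, that $\zeta_\alpha$ is rational. Put $c_\alpha(n)=\sum_{L\in\mathcal{L}_G(n)}\nfix_\alpha(L)$, so that $\log\zeta_\alpha(z)=\sum_{n\geq1}\frac{c_\alpha(n)}{n}z^n$. As $\zeta_\alpha(0)=1$, rationality lets us write $\zeta_\alpha(z)=\prod_j(1-z/\beta_j)^{m_j}$ over finitely many distinct $\beta_j\in\mathbb{C}\setminus\{0\}$ and nonzero integers~$m_j$; taking logarithms gives $c_\alpha(n)=-\sum_j m_j\beta_j^{-n}$ for every $n\geq1$, an exponential sum with \emph{no} polynomial part, since the logarithmic derivative of a rational function has only simple poles. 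In particular the generating function $\sum_n c_\alpha(n)z^n$ is rational with a simple pole of nonzero residue at each of the distinct points $\beta_j$, so its radius of convergence equals $\min_j|\beta_j|$.

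Next I would estimate the growth of $c_\alpha$. Polynomial subgroup growth gives $a_G(n)\leq n^{d}$ for some fixed~$d$, and, for each $\varepsilon>0$, the condition $\ugr(\alpha)=0$ gives $\nfix_\alpha(L)\leq e^{\varepsilon[L]}$ once $[L]$ is large; hence $0\leq c_\alpha(n)\leq a_G(n)\max_{[L]=n}\nfix_\alpha(L)\leq n^{d}e^{\varepsilon n}$ for large~$n$, and therefore $\limsup_n c_\alpha(n)^{1/n}\leq1$. By Cauchy--Hadamard this forces $\min_j|\beta_j|\geq1$, so $|\beta_j^{-1}|\leq1$ for every~$j$ and $|c_\alpha(n)|\leq\sum_j|m_j|$ for all~$n$: the sequence $(c_\alpha(n))$ is bounded. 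Since $\nfix_\alpha(L)=0$ for only finitely many $L\in\mathcal{L}_G$, every subgroup of sufficiently large index~$n$ contributes at least~$1$ to $c_\alpha(n)$, so $a_G(n)\leq c_\alpha(n)$ for large~$n$; thus $G$ has bounded subgroup growth. By Shalev's classification of groups with bounded subgroup growth~\cite{MR1475693}, the Hirsch length of~$G$ is then at most~$1$, contradicting the hypothesis. Hence $\zeta_\alpha$ is not rational, and the P\'olya--Carlson theorem shows that its circle of convergence, the unit circle, is a natural boundary.

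The crux, and the step I expect to be the main obstacle, is the passage from ``$\zeta_\alpha$ rational'' to ``$(c_\alpha(n))$ bounded''. What makes this work --- rather than merely giving polynomial growth of $c_\alpha(n)$, which would only recover polynomial subgroup growth and no contradiction --- is the exponential/logarithmic shape of $\zeta_\alpha$: rationality forces $c_\alpha(n)$ to be an exponential sum with no polynomial part, and the subexponential bound supplied by $\ugr(\alpha)=0$ together with polynomial subgroup growth then pins every $|\beta_j|$ down to be at least~$1$. The remainder is routine bookkeeping and the appeal to Shalev's theorem.
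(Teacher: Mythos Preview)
Your proof is correct and follows essentially the same route as the paper: P\'olya--Carlson reduces to excluding rationality, rationality yields $c_\alpha(n)=\sum_{L\in\mathcal{L}_G(n)}\nfix_\alpha(L)$ as a finite exponential sum with bases of modulus at most~$1$, whence $a_G(n)\leq c_\alpha(n)$ is bounded and Shalev's theorem gives the contradiction. The only cosmetic difference is that you bound all $|\beta_j^{-1}|\leq 1$ via the growth rate of $c_\alpha(n)$, whereas the paper bounds the poles and zeros of $\zeta_\alpha$ separately using analyticity and non-vanishing of $\zeta_\alpha$ on the open unit disc.
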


\begin{proof}
Since~$\zeta_\alpha$ has integer coefficients and radius of convergence~1, the P{\'o}lya-Carlson theorem implies that~$\zeta_\alpha$ is a either a rational function or has the unit circle as a natural boundary. 
As in~\cite[Ex.~3.4]{MR1411232}, suppose for a contradiction that~$\zeta_\alpha$ is rational, that is
\[
\zeta_\alpha(z)=
\frac{\lambda\prod_{i=1}^k(1-\xi_iz)}{\prod_{j=1}^\ell(1-\eta_jz)}
\]
for some $\lambda,\xi_1,\dots,\xi_k,\eta_1,\dots,\eta_\ell\in\mathbb{C}$. Elementary calculus then shows $\lambda=1$ and 
\begin{equation}\label{up_in_the_morning}
\sum_{L\in\mathcal{L}_G(n)}
\nfix_\alpha(L)
=
\sum_{j=1}^\ell\eta_j^n-\sum_{i=1}^k\xi_i^n,
\end{equation}
Since $\zeta_\alpha$ is analytic inside the unit disk, $|\eta_j|\leqslant 1$ for $1\leqslant j\leqslant \ell$. Furthermore, as $\zeta_\alpha$ is the exponential of a convergent power series inside the unit disk, we also have $|\xi_i|\leqslant 1$ for $1\leqslant i\leqslant k$. Hence,
\begin{equation}\label{forced_bounded_subgroup_growth}
a_n(G)\leqslant\sum_{L\in\mathcal{L}_G(n)}\nfix_\alpha(L)\leqslant k + \ell,
\end{equation}
where the first inequality holds for all but finitely many $n$ by hypothesis, and the second inequality holds for all $n$, by~(\ref{up_in_the_morning}). However, Shalev~\cite[Th.~1.2]{MR1475693} has shown that $a_n(G)=\bigo(n)$ if and only if $G$ is virtually cyclic. In particular, since~$G$ has Hirsch length at least~$2$ by hypothesis, the bounded subgroup growth purported by~(\ref{forced_bounded_subgroup_growth}) contradicts Shalev's theorem.
\end{proof}

\begin{corollary}\label{connected_group_corollary}
Let $\alpha$ be a mixing action of a polycyclic-by-finite PSG group~$G$ of Hirsch length at least $2$ by automorphisms of a compact connected finite dimensional abelian group~$X$. If the formal power series for $\zeta_\alpha$ has integer coefficients, then $\zeta_\alpha$ has the unit circle as a natural boundary.
\end{corollary}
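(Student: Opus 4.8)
The plan is to deduce the corollary by simply combining Theorem~\ref{slow_growth_rate_theorem} with Proposition~\ref{natural_boundary_proposition}; the genuinely analytic content (the P\'olya--Carlson dichotomy and the appeal to Shalev's classification) has already been carried out in those results, so what remains is to confirm that the setting of the corollary satisfies the hypotheses of Proposition~\ref{natural_boundary_proposition}.

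First I would observe that the standing assumptions of the corollary are exactly those of Theorem~\ref{slow_growth_rate_theorem}: $G$ is polycyclic-by-finite, PSG, and of Hirsch length at least~$2$, and $\alpha$ is a mixing action of $G$ by automorphisms of a compact connected finite-dimensional abelian group~$X$. As recorded in the remarks preceding Theorem~\ref{slow_growth_rate_theorem}, mixing forces $\nfix_\alpha(L)<\infty$ for every $L\in\mathcal{L}_G$, so $\zeta_\alpha$ is a well-defined formal power series, and Theorem~\ref{slow_growth_rate_theorem} gives $\ugr(\alpha)=0$.

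Next I would verify the one hypothesis of Proposition~\ref{natural_boundary_proposition} that is not immediate, namely that $\nfix_\alpha(L)=0$ for at most finitely many $L\in\mathcal{L}_G$. Since $\alpha$ acts by group automorphisms, every $g\in G$ fixes the identity element $0\in X$, so $0\in\fix_\alpha(L)$ and hence $\nfix_\alpha(L)\geqslant 1$ for \emph{every} $L\in\mathcal{L}_G$. Thus there are in fact no $L$ with $\nfix_\alpha(L)=0$, and the hypothesis holds trivially.

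With $\ugr(\alpha)=0$, with the assumption (part of the corollary's statement) that the formal power series for $\zeta_\alpha$ has integer coefficients, and with $G$ polycyclic-by-finite, PSG, and of Hirsch length at least~$2$, all the hypotheses of Proposition~\ref{natural_boundary_proposition} are met, and its conclusion yields that $\zeta_\alpha$ has the unit circle as a natural boundary. I do not expect any real obstacle, since the corollary is essentially a repackaging of Theorem~\ref{slow_growth_rate_theorem} and Proposition~\ref{natural_boundary_proposition}; the only point that must not be skipped is the short argument that the automorphism structure (fixing $0$) rules out $\nfix_\alpha(L)=0$, since for an action on a bare set one could a priori have infinitely many subgroups with no periodic points, which is precisely the situation excluded by Proposition~\ref{natural_boundary_proposition}.
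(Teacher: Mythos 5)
Your proposal is correct and follows the paper's own route: the paper's proof of this corollary is precisely the combination of Proposition~\ref{natural_boundary_proposition} with Theorem~\ref{slow_growth_rate_theorem}. Your extra verifications (mixing gives $\nfix_\alpha(L)<\infty$, and the fixed identity element $0\in X$ gives $\nfix_\alpha(L)\geqslant 1$ for every $L$, so the finiteness hypothesis of the proposition holds vacuously) are accurate and simply make explicit what the paper leaves implicit.
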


\begin{proof}
This follows from Proposition~\ref{natural_boundary_proposition} and Theorem~\ref{slow_growth_rate_theorem}.
\end{proof}

We return to the appearance of a natural boundary for the dynamical zeta function for full shifts in Section~\ref{full_shift_section}. A more detailed investigation of dynamical systems for which the zeta function has a natural boundary is beyond the scope of this article, even for actions generated by automorphisms of compact abelian groups. See~\cite{MR1411232} and~\cite{miles_ward_zero_dimensional} for some of the more subtle issues involved in the case $G=\mathbb{Z}^d$, with $d\geqslant 2$. In this setting, Lind conjectures~\cite[Conj.~7.1]{MR1411232} that $\zeta_\alpha$ is meromorphic for $|z|<\exp(-\overline{\ugr}(\alpha))$, where~$\overline{\ugr}(\alpha)$ is the limit obtained in~(\ref{ugr_definition}) by replacing $[L]$ with 
\[
\min\{||\mathbf{n}||:\mathbf{n}\in L\setminus\{\mathbf{0}\}\}. 
\]
Lind also conjectures that the circle~$|z|=\exp(-\overline{\ugr}(\alpha))$ is a natural boundary for the function. Although some progress is made in addressing Lind's conjecture in~\cite{MR3336617} and~\cite{miles_ward_zero_dimensional}, this remains open in the general context of $\mathbb{Z}^d$-actions by automorphisms of compact abelian groups.

To conclude this section, we consider the dynamical zeta function's utility for counting finite orbits. For an action~$\alpha$ of a finitely generated group~$G$, denote the number of orbits of~$\alpha$ of cardinality at most~$N$ by~$\pi_\alpha(N)$. 

\begin{proposition}\label{principal_orbit_counting_proposition}
Let~$G$ be a finitely generated group and let~$\alpha$
be a~$G$-action on a set~$X$ such that for all~$L\in\mathcal{L}_G$,~$\nfix_\alpha(L)<\infty$. Then
\[
\pi_\alpha(N)=
\sum_{[L]\leqslant N}
\frac{1}{[L]}
\sum_{K\geqslant L}\mu(K,L)\nfix_\alpha(K).
\]
\end{proposition}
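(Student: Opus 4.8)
The plan is to count orbits according to the conjugacy class of their stabilizers and then feed the result into the Möbius inversion already packaged in Lemma~\ref{moebius_lemma}. First I would note that, by the orbit--stabilizer theorem together with Lemma~\ref{basic_fixed_point_lemma}(\ref{stabilizer_formula}), all points of a finite orbit $Y$ have stabilizers of the same index, namely $|Y|$; hence $|Y|\leqslant N$ precisely when the stabilizer of one (equivalently every) point of $Y$ has index at most $N$. So $\pi_\alpha(N)$ is the number of orbits whose stabilizers have index $\leqslant N$, and it suffices to count, for each fixed $n$, the orbits of cardinality exactly $n$, and then sum over $n\leqslant N$.

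Next I would show that the number of orbits of cardinality $n$ equals $\sum_{L\in\mathcal{L}_G(n)}\norb_\alpha(L)/[N_G(L)]$. The key observation is that the set of stabilizers realised within a single orbit $Y$ is exactly a full conjugacy class $\Omega(L)\in\mathcal{O}(\mathcal{L}_G)$: if $x\in Y$ has $\stab(x)=L$, then $\stab(gx)=gLg^{-1}$ by Lemma~\ref{basic_fixed_point_lemma}(\ref{stabilizer_formula}), and as $g$ ranges over $G$ this runs through all of $\Omega(L)$. Consequently the orbits contributing to $\norb_\alpha(L)$ are precisely those with stabilizer class $\Omega(L)$, and $\norb_\alpha$ is constant on $\Omega(L)$; grouping the index-$n$ subgroups into their conjugacy classes, each of size $|\Omega(L)|=[N_G(L)]$, then gives
\[
\sum_{L\in\mathcal{L}_G(n)}\frac{\norb_\alpha(L)}{[N_G(L)]}=\sum_{\Omega:\,[\Omega]=n}\norb_\alpha(L_\Omega),
\]
where $L_\Omega$ is a representative of $\Omega$, and the right-hand side is the number of orbits of cardinality $n$. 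It is exactly this normaliser-index weighting that was missing in the earlier accounts mentioned in the Remark, and whose cancellation is promised there.

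Finally I would substitute the inversion formula of Lemma~\ref{moebius_lemma}(\ref{inversion_formula}) into the last display. The factor $[N_G(L)]$ cancels, so the number of orbits of cardinality $n$ equals $\sum_{L\in\mathcal{L}_G(n)}\frac{1}{[L]}\sum_{K\geqslant L}\mu(L,K)\nfix_\alpha(K)$, and summing over $[L]=n\leqslant N$ yields the stated formula for $\pi_\alpha(N)$. Every sum involved is finite — there are finitely many subgroups of index $\leqslant N$, and Lemma~\ref{moebius_lemma} guarantees finitely many orbits of bounded cardinality — so no convergence question arises and the rearrangements are legitimate. I expect the main obstacle to be the middle step: verifying that each orbit is counted with the correct multiplicity, i.e.\ that passing from ``orbits of cardinality $n$'' to a sum over \emph{all} index-$n$ subgroups forces division by $[N_G(L)]$. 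Once this bookkeeping is set up correctly, the Möbius-inversion step is inherited directly from Lemma~\ref{moebius_lemma} and the rest is routine.
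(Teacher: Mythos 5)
Your proposal is correct and follows essentially the same route as the paper: count the orbits of cardinality $n$ as $\sum_{L\in\mathcal{L}_G(n)}\norb_\alpha(L)/[N_G(L)]$ by grouping the index-$n$ subgroups into conjugacy classes (on which $\norb_\alpha$ is constant), then apply Lemma~\ref{moebius_lemma}(\ref{inversion_formula}) so the normalizer index cancels, and sum over $n\leqslant N$. The bookkeeping with the factor $[N_G(L)]$ that you flag as the main obstacle is exactly the point the paper's proof makes as well, so there is no gap.
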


\begin{proof}
First note that~$x\in X$ lies in orbit of length~$n$ if and only if~$\stab(x)\in\mathcal{L}_G(n)$. For any~$L\in\mathcal{L}_G(n)$ and for any orbit of the action of~$\alpha$ on~$X$ that contains a point with stabilizer~$L$, the stabilizers of all the other elements in this orbit are of the form~$gLg^{-1}$, for some~$g\in G$, by Lemma~\ref{basic_fixed_point_lemma}(\ref{stabilizer_formula}). There are~$[N_G(L)]$ such stabilizers, all of index~$n$, as this is the length of the orbit~$\Omega$ of~$L$ under the action of~$G$ on~$\mathcal{L}_G(n)$ given by conjugation. Furthermore,~$\norb_\alpha(L)$ gives the number of orbits in~$X$ with a stabilizer in~$\Omega$, for any choice of~$L\in\Omega$. Letting~$\Omega$ run over all orbits in~$\mathcal{L}_G(n)$, we may express the number of orbits of length~$n$ for the action~$\alpha$ as 
\[
\sum_{\Omega}\frac{1}{|\Omega|}\sum_{L\in\Omega}\norb_\alpha(L)
=\sum_{\Omega}\sum_{L\in\Omega}\frac{\norb_\alpha(L)}{[N_G(L)]}
=\sum_{L\in\mathcal{L}_G(n)}\frac{\norb_\alpha(L)}{[N_G(L)]}
\]
which, by Lemma~\ref{moebius_lemma}, is equal to 
\[
\sum_{L\in\mathcal{L}_G(n)}
\frac{1}{[L]}
\sum_{K\geqslant L}\mu(L,K)\nfix_\alpha(K).
\]
The formula for~$\pi_\alpha(N)$ now follows.
\end{proof}

Following~\cite{MR1978431}, write $s_G(N)=\sum_{n\leqslant N}a_G(n)$. Define 
\[
f_\alpha(N)=\sup_{[L]\leqslant N}\nfix_\alpha(L)
\mbox{ and }
m_G(N)=\displaystyle\sup_{[L]\leqslant N, K>L}\frac{|\mu(L,K)|}{[L]}.
\]
As a consequence of the orbit counting formula provided by Proposition~\ref{principal_orbit_counting_proposition}, the following link between~$\pi_\alpha(N)$ and the coefficients of $\log\zeta_\alpha$ is available if the quantity $f_\alpha(N/2)/f_\alpha(N)$ decays sufficiently rapidly.  

\begin{corollary}\label{corollary_for_a_main_term}
Let~$G$ be a finitely generated group and let~$\alpha$
be a~$G$-action on a set~$X$ such that for all~$L\in\mathcal{L}_G$,~$\nfix_\alpha(L)<\infty$ and
\begin{equation}\label{decay_condition}
f_\alpha(N/2)m_G(N)s_G(N) N=\littleo(f_\alpha(N)).
\end{equation}
Then
\[
\pi_\alpha(N)\sim\sum_{[L]\leqslant N}\frac{\nfix_\alpha(L)}{[L]}.
\]
\end{corollary}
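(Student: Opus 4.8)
The natural thing to do is to use the exact formula for $\pi_\alpha(N)$ supplied by Proposition~\ref{principal_orbit_counting_proposition} and peel off its diagonal term. Writing out that formula and separating, in the inner sum, the term $K=L$ (for which $\mu(L,L)=1$) from the terms with $K>L$, we obtain $\pi_\alpha(N)=S(N)+E(N)$ where $S(N)=\sum_{[L]\le N}\nfix_\alpha(L)/[L]$ is exactly the proposed main term and
\[
E(N)=\sum_{[L]\le N}\frac{1}{[L]}\sum_{K>L}\mu(L,K)\nfix_\alpha(K).
\]
So the task reduces to proving $E(N)=\littleo(S(N))$.

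\textbf{Estimating the remainder.} Two elementary reductions pin down the size of each factor entering $E(N)$. First, if $K>L$ then $[G:L]=[G:K][K:L]$ with $[K:L]\ge 2$, so $[K]\le[L]/2\le N/2$; hence every $K$ appearing in the inner sum of $E(N)$ satisfies $\nfix_\alpha(K)\le f_\alpha(N/2)$, and one may pull that factor out front. Second, $|\mu(L,K)|/[L]\le m_G(N)$ for all the relevant pairs by definition of $m_G$, and the supergroups $K$ of $L$ have index a proper divisor of $[L]$, which already bounds their number by $s_G([L]/2)\le s_G(N)$. The one point that requires genuine work is that feeding these facts in term by term is too wasteful — it produces a spurious extra factor of $s_G(N)$, because individual Möbius values across an interval $[L,K]$ whose quotient is (a product of) elementary abelian groups can be comparable to $[L]\,m_G(N)$. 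The correct statement to establish is the aggregated bound
\[
\sum_{[L]\le N}\frac{1}{[L]}\sum_{K>L}|\mu(L,K)|\;=\;\bigo\!\bigl(m_G(N)\,s_G(N)\bigr),
\]
which uses the structure of the Möbius function on $\mathcal{L}_G$ (vanishing of $\mu$ unless the quotient is elementary abelian, and the sparseness of such quotients among subgroups of a given index). Granting this, $|E(N)|\le f_\alpha(N/2)\sum_{[L]\le N}\frac1{[L]}\sum_{K>L}|\mu(L,K)|\ll f_\alpha(N/2)\,m_G(N)\,s_G(N)$. I expect this aggregated Möbius estimate to be the main obstacle; everything else is routine.

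\textbf{Conclusion.} To turn this into an asymptotic one also needs a lower bound for $S(N)$: choosing $L^{\ast}\in\mathcal{L}_G$ with $[L^{\ast}]\le N$ and $\nfix_\alpha(L^{\ast})=f_\alpha(N)$ gives $S(N)\ge \nfix_\alpha(L^{\ast})/[L^{\ast}]\ge f_\alpha(N)/N$. Combining the two estimates,
\[
\frac{|E(N)|}{S(N)}\;\ll\;\frac{f_\alpha(N/2)\,m_G(N)\,s_G(N)\,N}{f_\alpha(N)}\;\longrightarrow\;0
\]
by hypothesis~(\ref{decay_condition}), whence $\pi_\alpha(N)\sim S(N)$, as claimed. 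It is worth noting that Lemma~\ref{moebius_lemma} allows $E(N)$ to be repackaged in the Möbius-free form $-\sum_{K}\frac{\norb_\alpha(K)}{[N_G(K)]}\sum_{2\le m\le N/[K]}\frac{a_K(m)}{m}$, which transparently shows $\pi_\alpha(N)\le S(N)$; however the resulting bound, using only $\norb_\alpha(K)\le\frac{[N_G(K)]}{[K]}\nfix_\alpha(K)$, loses too much, so retaining the Möbius weight in the second step is essential.
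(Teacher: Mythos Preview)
Your decomposition $\pi_\alpha(N)=S(N)+E(N)$, the observation that every proper supergroup $K>L$ has $[K]\le N/2$ so that $\nfix_\alpha(K)\le f_\alpha(N/2)$, and the lower bound $S(N)\ge f_\alpha(N)/N$ are exactly the paper's argument. The paper's entire proof is the single displayed inequality
\[
\frac{E(N)}{S(N)}\;\le\;\frac{N}{f_\alpha(N)}\,f_\alpha(N/2)\,m_G(N)\,s_G(N),
\]
asserted without further comment.

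Where you diverge is in treating the bound on $|E(N)|$ as a serious obstacle. You are right that the naive count gives $|E(N)|\le f_\alpha(N/2)\,m_G(N)\cdot|\{(L,K):L<K,\ [L]\le N\}|$, and this pair count need not be $O(s_G(N))$ (already for $G=\mathbb{Z}$ it is $\sum_{n\le N}(d(n)-1)\sim N\log N$, not $N$). But the paper does not prove your ``aggregated M\"obius estimate'' either---it simply writes the crude inequality and moves on, remarking immediately afterward that the corollary ``is somewhat crude, and can be improved for more precise orbit growth estimates, but it is intended to illustrate and generalize a key idea''. So the step you single out as ``the main obstacle'' is not something the paper regards as requiring work. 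In every application considered (full shifts, the toral example), $f_\alpha(N/2)/f_\alpha(N)$ decays exponentially while $m_G$, $s_G$ grow subexponentially, so stray polynomial factors are irrelevant.

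That said, as written your proposal is incomplete precisely where you say it is: you grant the aggregated estimate and your sketched justification (vanishing of $\mu$ off complemented intervals) does not obviously deliver the $O(m_G(N)s_G(N))$ bound. If you want a fully rigorous statement rather than the paper's heuristic one, the cleanest route is simply to weaken the hypothesis to $f_\alpha(N/2)\,m_G(N)\,s_G(N)^2\,N=o(f_\alpha(N))$, which the naive pair count yields directly and which still covers all the examples.
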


\begin{proof}
By Proposition~\ref{principal_orbit_counting_proposition}
\[
\pi_\alpha(N)
=
\sum_{[L]\leqslant N}\frac{\nfix_\alpha(L)}{[L]}
+
\underbrace{
\sum_{[L]\leqslant N}
\frac{1}{[L]}
\sum_{K>L}\mu(K,L)\nfix_\alpha(K)}_{E(N)}.
\]
For any $L\in\mathcal{L}_G$ with $[L]\leqslant N$, any proper supergroup $K>L$ satisfies $[K]\leqslant N/2$, so 
\[
\frac{E(N)}{\sum_{[L]\leqslant N}\nfix_\alpha(L)/[L]}
\leqslant
\frac{N}{f_\alpha(N)}
f_\alpha(N/2)m_G(N)s_G(N) 
\]
and the required result follows.
\end{proof}

Corollary~\ref{corollary_for_a_main_term} is somewhat crude, and can be improved for more precise orbit growth estimates, but it is intended to illustrate and generalize a key idea that has featured in earlier studies of orbit growth, for example, in~\cite{MR2465676} and~\cite{ETS:9315807}. For many dynamical systems of natural interest, such as the full shifts considered in Section~\ref{full_shift_section}, $f_\alpha(N/2)/f_\alpha(N)$ decays exponentially, whilst the other terms in~(\ref{decay_condition}), that depend only on $G$ rather than on~$\alpha$, grow subexponentially. For example, if~$G$ is a PSG group, then~$s_G(N)$ grows at most polynomially and, if $G$ is nilpotent, the results of Kratzer and Th{\'e}venaz~\cite{MR761806}, together with an application of Crapo's closure theorem~\cite{MR0245483}, show that $m_G(N)$ is subexponential in~$N$. Notably, these observations together with Corollary~\ref{corollary_for_a_main_term} give an explicit realization of the methods of~\cite{MR2465676} in terms of the dynamical zeta function developed here. 

Note that in the classical case of a $\mathbb{Z}$-action, $s_\mathbb{Z}(N)=N$ and $|\mu(\cdot)|\leqslant 1$, so if such an action is generated by an expansive homeomorphism of a compact metric space, then~(\ref{decay_condition}) is satisfied due to the key estimate of Bowen~(\ref{bowens_inequalities}), and Corollary~\ref{corollary_for_a_main_term} again applies.

\begin{example}\label{virtually_cyclic_toral_example}
Consider the group $\mathbb{Z}\times(\mathbb{Z}/p\mathbb{Z})$, where $p$ is a rational prime. We have the following straightforward description of~$\mathcal{L}_{\mathbb{Z}\times(\mathbb{Z}/p\mathbb{Z})}$. If $p\nmid n$, then there is precisely one subgroup of index~$n$, namely 
\[
L(n,0)=\langle(n,0),(0,1)\rangle
\]
and if $p\mid n$, then there are $p$ subgroups of index~$n$ of the form
\[
L(n,k)=\langle(n,0),(kn/p,1)\rangle,
\]
where $0\leqslant k\leqslant p-1$, plus one further subgroup $L(n)=\langle(n/p,0)\rangle$ of index~$n$. 

Now consider the particular case of an action of $G=\mathbb{Z}\times(\mathbb{Z}/3\mathbb{Z})$. The matrices
\[
A
=
\begin{pmatrix}
1& 2& 1& 0\\
-2&3&0&1\\
1&0&0&0\\
0&1&0&0
\end{pmatrix}
\mbox{ and }
B
=
\begin{pmatrix}
0& -1& 0& 0\\
1&-1&0&0\\
0&0&0&-1\\
0&0&1&-1
\end{pmatrix}
\]
satisfy $AB=BA$ and $B^3=I_4$, and together generate a $G$-action $\alpha$ by automorphisms of the torus $\mathbb{T}^4$. The matrix $A$ has characteristic polynomial $x^4-4x^3+5x^2+4x+1$ with non-real roots $\omega_1,\overline{\omega}_1,\omega_2,\overline{\omega}_2$, for which $|\omega_1|\approx 2.764$, $|\omega_2|\approx 0.362$ and $\omega_1\omega_2=-1$. 

The set of fixed points $\fix_\alpha(B)=\{(\frac{j}{3},\frac{-j}{3},\frac{k}{3},\frac{-k}{3})^\mathsf{T}:j,k=0,1,2\}$ is a closed $A$-invariant subgroup of $\mathbb{T}^4$. Since $\fix_\alpha(L(n,0))\leqslant\fix_\alpha(B)$ for all $n\geqslant 1$, a standard computation shows
\begin{equation}\label{there_is_power_in_the_factory}
\nfix_\alpha(L(n,0))=
\left\{
\begin{array}{ll}
9 & \mbox{ if }8\mid n,\\
1 & \mbox{ if }8\nmid n.
\end{array}
\right.
\end{equation}
So, if~$3\nmid n$, then the number of periodic points for the sole subgroup $L(n,0)$ in $\mathcal{L}_G(n)$ is bounded. On the other hand, if $3\mid n$, in addition to the subgroup $L(n,0)\in\mathcal{L}_G(n)$ for which $\nfix_\alpha(L(n,0))\in\{1, 9\}$,
the $3$ subgroups $L(n),L(n,1),L(n,2)\in\mathcal{L}_G(n)$ have exponentially growing sets of periodic points, which are also closed finite subgroups of~$\mathbb{T}^4$. As is standard for compact abelian group automorphisms, groups of periodic points can be calculated using the intersection of kernels of homomorphisms. Upon writing~$j=n/3$, we see~$\nfix_\alpha(L(n))$ is given by
\begin{eqnarray*}
|\ker(A^j-I_4)|
&=&
|\det(A^j-I_4)|\\
&=&
|\omega_1^j-1||\overline{\omega}_1^j-1||\omega_2^j-1||\overline{\omega}_2^j-1|.
\end{eqnarray*}
Furthermore, $\nfix_\alpha(L(n,2))$ is given by 
\begin{equation}\label{hang_with_me}
|\ker(A^{n}-I_4)\cap\ker(A^jB^2-I_4)|
=|\ker(A^{n}-I_4)\cap\ker(A^j-B)|
\end{equation}
and since $A^n-I_4=(A^j-B)(A^{2j}+BA^{j}+B^{2})$, $\ker(A^{n}-I_4)\subset\ker(A^j-B)$. Therefore, the right hand side of~(\ref{hang_with_me}) is 
\[
|\det(A^j-B)|
=
|\omega_1^j-\overline{\delta}||\overline{\omega}_1^j-\delta||\omega_2^j-\overline{\delta}||\overline{\omega}_2^j-\delta|,
\]
where $\delta=\exp(2\pi i/3)$.
A similar calculation shows $\nfix_\alpha(L(n,1))$ is given by 
\[
|\det(A^j-B^2)|
=
|\omega_1^j-\delta||\overline{\omega}_1^j-\overline{\delta}||\omega_2^j-\delta||\overline{\omega}_2^j-\overline{\delta}|.
\] 
Since $\omega_1\omega_2=-1$, by expanding these products of absolute values, we find that $\nfix_\alpha(L(n))+\nfix_\alpha(L(n,1))+\nfix_\alpha(L(n,2))$ is equal to
\begin{equation}\label{power_in_the_land}
\sum_{k=1}^3|\det(A^j-B^k)|=
3(
\lambda_1^j
+\lambda_2^j
+\lambda_3^j
+\lambda_4^j
+2),
\end{equation}
where $\lambda_1=\omega_1\overline{\omega}_1$, $\lambda_2=\omega_1\overline{\omega}_2$,
$\lambda_3=\omega_2\overline{\omega}_1$
and $\lambda_4=\omega_2\overline{\omega}_2$. Hence, combining~(\ref{there_is_power_in_the_factory}) and~(\ref{power_in_the_land}), it follows that there exist constants $C_1, C_2>0$ such that 
\[
C_1\lambda_1^{\lfloor N/3\rfloor} \leqslant f_\alpha(N)\leqslant C_2\lambda_1^{\lfloor N/3\rfloor}.
\]

By~\cite[Prop.~2.4]{MR761806}, for any given finitely generated abelian group, there is a constant $C_3>0$ such that for any $K,L\in\mathcal{L}_G$ with $L\leqslant K$, 
\[
|\mu(L,K)|=|\mu(K/L,0)|\leqslant C_3^{(\log[L])^2}, 
\]
so $m_G(N)\leqslant\frac{1}{N}C_3^{(\log N)^2}$. In addition, $s_G(N)=3\lfloor N/3\rfloor+N$, and so our combined estimates show that~(\ref{decay_condition}) is satisfied. Hence, Corollary~\ref{corollary_for_a_main_term} implies that $\pi_\alpha(N)\sim\sum_{[L]\leqslant N}\nfix_\alpha(L)/[L]$. Thus,
\[
\pi_\alpha(N)\sim\sum_{n\leqslant N}\frac{1}{n}\sum_{L\in\mathcal{L}_G(n)}\nfix_\alpha(L)
\sim
\sum_{n\leqslant \lfloor N/3\rfloor}\frac{\lambda_1^n}{n}
\sim
\frac{\lambda_1^{\lfloor N/3\rfloor+1}}{(\lambda_1-1)\lfloor N/3\rfloor}.
\]

With the aid of~(\ref{there_is_power_in_the_factory}) and (\ref{power_in_the_land}),~$\zeta_\alpha$ may be calculated as follows. 
\begin{eqnarray*}
\zeta_\alpha(z)
&=&
\exp\left(
\sum_{8\mid n}\frac{9}{n}z^n
+
\sum_{8\nmid n}\frac{1}{n}z^n
+
\sum_{3\mid n}\frac{3}{n}\left(2+\sum_{k=1}^4
\lambda_k^{n/3}\right)z^n
\right)\\
&=&
(1-z)^{-1}
(1-z^8)^{-1}
(1-z^3)^{-2}
\prod_{k=1}^4
(1-\lambda_k z^3)^{-1}.
\end{eqnarray*}
In particular,~$\zeta_\alpha$ is rational.
\end{example}

\begin{remark}
An important development in the history of dynamical zeta functions was the use of Tauberian theorems to derive orbit growth asymptotics, as in the work of Parry and Pollicott~\cite{MR710244}, \cite{MR727704}, for example. This avenue opens when~$\zeta_\alpha$ has an analytic continuation beyond the circle of convergence but often, in quite general situations, one can obtain sufficient information concerning periodic point counts to deduce orbit growth estimates via direct methods, aided instead by Corollary~\ref{corollary_for_a_main_term}, as in Example~\ref{virtually_cyclic_toral_example}. 
\end{remark}

\section{Product formulae}\label{product_formulae_section}

For any finite orbit~$Y\subset X$, Lemma~\ref{basic_fixed_point_lemma}(\ref{stabilizer_formula}) shows that the stabilizers of elements of~$Y$ comprise a single orbit in~$\mathcal{O}(\mathcal{L}_G)$, and since conjugate subgroups are isomorphic, we may choose a representative group~$L(Y)\in\mathcal{L}_G$ for this isomorphism class and uniquely define the \emph{trivial action}~$\tau(Y)$ of~$L(Y)$ on a single point, together with its associated dynamical zeta function~$\zeta_{\tau(Y)}$, which is clearly invariant under the isomorphism just described. This allows us to express our initial product formula for the dynamical zeta function.

\begin{lemma}\label{basic_dynamical_zeta_product_lemma}
Let~$G$ be a finitely generated group and let~$\alpha$ be a~$G$-action on a set~$X$ such that for all~$L\in\mathcal{L}_G$,~$\nfix_\alpha(L)<\infty$.  
Then
\[
\zeta_\alpha(z)=\prod_{Y}\zeta_{\tau(Y)}(z^{|Y|}),
\]
where~$Y$ runs over all finite orbits in~$X$.
\end{lemma}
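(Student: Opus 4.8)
The plan is to take formal logarithms and reduce the claimed product formula to a single combinatorial identity relating periodic-point counts to subgroup counts in stabilizers, which is then settled by a double-counting argument.

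First I would record the shape of $\zeta_{\tau(Y)}$. For the trivial action $\tau(Y)$ of the (finitely generated, finite-index) group $L(Y)$ on a single point, every subgroup fixes that point, so $\nfix_{\tau(Y)}(M)=1$ for all $M\in\mathcal{L}_{L(Y)}$; hence
\[
\zeta_{\tau(Y)}(z)=\exp\sum_{n\geqslant 1}\frac{a_{L(Y)}(n)}{n}z^n,
\qquad
\zeta_{\tau(Y)}\bigl(z^{|Y|}\bigr)=\exp\sum_{n\geqslant 1}\frac{a_{L(Y)}(n)}{n}z^{|Y|n}.
\]
Since $a_{L(Y)}(1)=1$, each factor beyond its constant term $1$ has lowest-order term $z^{|Y|}$, and by Lemma~\ref{moebius_lemma} there are only finitely many finite orbits of each cardinality, so $\prod_{Y}\zeta_{\tau(Y)}(z^{|Y|})$ is a well-defined formal power series, a finite product modulo $z^{N+1}$ for each $N$. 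It therefore suffices to compare $\zeta_\alpha$ with this product coefficient-by-coefficient; after taking $\log$ this amounts to showing, for every $N\geqslant 1$, that the coefficient of $z^N$ in $\sum_{L\in\mathcal{L}_G}\frac{\nfix_\alpha(L)}{[L]}z^{[L]}$ equals that in $\sum_{Y}\sum_{n\geqslant 1}\frac{a_{L(Y)}(n)}{n}z^{|Y|n}$. Multiplying by $N$, this is exactly the identity
\[
\sum_{L\in\mathcal{L}_G(N)}\nfix_\alpha(L)=\sum_{Y:\,|Y|\mid N}|Y|\,a_{L(Y)}\!\bigl(N/|Y|\bigr).
\]

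To prove this I would count in two ways the set $P_N=\{(L,x):L\in\mathcal{L}_G(N),\ x\in X,\ L\leqslant\stab(x)\}$. Summing over $L$ first and using Lemma~\ref{basic_fixed_point_lemma}(\ref{membership_via_stabilizers}), the fibre over $L$ is $\fix_\alpha(L)$, of size $\nfix_\alpha(L)$, giving the left-hand side; note every such $x$ lies in a finite orbit, since $[G:\stab(x)]\leqslant[G:L]=N$, so the cardinalities on the right are finite. Summing over $x$ instead, partition $X$ into orbits; only finite orbits $Y$ contribute, and for $x\in Y$ the fibre is the set of index-$N$ subgroups of $G$ lying below $\stab(x)$. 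By multiplicativity of the index, $L\leqslant\stab(x)$ with $[G:L]=N$ forces $[\stab(x):L]=N/[G:\stab(x)]=N/|Y|$ (using orbit-stabilizer), so such $L$ exist only when $|Y|\mid N$, and are then precisely the index-$N/|Y|$ subgroups of $\stab(x)$, of which there are $a_{\stab(x)}(N/|Y|)=a_{L(Y)}(N/|Y|)$ since $\stab(x)\cong L(Y)$. Summing over the $|Y|$ points of $Y$ and then over $Y$ gives the right-hand side, establishing the identity and hence the lemma.

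I expect the only real care to be bookkeeping: verifying that $\fix_\alpha(L)$ meets only finite orbits (so every sum is finite), that the correspondence between ``index $N$ in $G$, below $\stab(x)$'' and ``index $N/|Y|$ in $\stab(x)$'' is a bijection in both directions, and that replacing $\stab(x)$ by the isomorphic representative $L(Y)$ is legitimate because $a_H(m)$ depends only on the isomorphism type of $H$. There is no analytic obstacle: the whole argument is an identity of formal power series with only finitely many terms contributing in each degree.
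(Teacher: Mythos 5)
Your proof is correct. It differs from the paper's argument mainly in bookkeeping: the paper works directly with the exponential series, splitting $\fix_\alpha(L)$ into its intersections with finite orbits, expressing $|Y\cap\fix_\alpha(L)|$ via the block-size formula $[L]/[N_G(L)]$ and the conjugation orbit $\Omega(Y)$ of stabilizers, and then reversing the order of summation and averaging over the conjugates $K\in\Omega(Y)$ to recognize each factor $\zeta_{\tau(Y)}(z^{|Y|})$. You instead take logarithms, compare coefficients of $z^N$, and prove the single identity
\[
\sum_{L\in\mathcal{L}_G(N)}\nfix_\alpha(L)=\sum_{Y:\,|Y|\mid N}|Y|\,a_{L(Y)}\bigl(N/|Y|\bigr)
\]
by double-counting incidences $(L,x)$ with $L\leqslant\stab(x)$. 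The two computations count the same pairs: in the paper each conjugate stabilizer $K$ is weighted by the block size $|Y|/|\Omega(Y)|$, whereas you sum over the $|Y|$ points and use only Lemma~\ref{basic_fixed_point_lemma}(\ref{membership_via_stabilizers}), orbit--stabilizer, multiplicativity of the index, and the fact that stabilizers along an orbit are conjugate (hence share the counting function $a_{L(Y)}$). What your route buys is that it bypasses the block system and normalizer indices entirely, making the finiteness checks and the combinatorial content more transparent; what the paper's route buys is a direct identity of formal series (orbit by orbit) without passing to individual coefficients, which is the form reused in the proof of Theorem~\ref{explicit_product_theorem}. Your treatment of convergence of the formal product (finitely many orbits of each cardinality, each factor congruent to $1$ modulo $z^{|Y|}$) matches the paper's remarks and is adequate.
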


\begin{proof}
Let~$\mathcal{Y}$ denote the set of finite orbits in~$X$ and for each~$L\in\mathcal{L}_G$, let
$\nfix_{\alpha}(Y, L)=|Y\cap\fix_{\alpha}(L)|$.
Let~$\Omega(Y)\in\mathcal{O}(\mathcal{L}_G)$  comprise the set of stabilizers of elements of~$Y$, so that by Lemma~\ref{basic_fixed_point_lemma}(\ref{membership_via_stabilizers}) and~(\ref{block_size})
\[
\nfix_{\alpha}(Y, L)
=
\frac{|Y|}{|\Omega(Y)|}\sum_{K\in\Omega(Y)}\chi(L,K)
\] 
where~$\chi(L,K)=1$ if~$L\leqslant K$ and~$\chi(L,K)=0$ otherwise.
Therefore, 
\begin{eqnarray*}
\zeta_\alpha(z) 
& = &
\exp\sum_{L\in\mathcal{L}_G}
\sum_{Y\in\mathcal{Y}}
\frac{\nfix_{\alpha}(Y, L)}{[L]}
z^{[L]}
\\
& = &
\prod_{Y\in\mathcal{Y}}
\exp\sum_{L\in\mathcal{L}_G}
\frac{\nfix_{\alpha}(Y, L)}{[L]}
z^{[L]}
\\
& = &
\prod_{Y\in\mathcal{Y}}
\exp
\frac{|Y|}{|\Omega(Y)|}
\sum_{L\in\mathcal{L}_G}
\sum_{K\in\Omega(Y)}\frac{\chi(L,K)}{[L]}
z^{[L]}.
\end{eqnarray*}
Reversing the order of summation over~$L$ and~$K$ gives
\[
\zeta_\alpha(z) 
=
\prod_{Y\in\mathcal{Y}}
\exp
\frac{|Y|}{|\Omega(Y)|}
\sum_{K\in\Omega(Y)}
\sum_{L\in\mathcal{L}_K}
\frac{1}{[L]}
z^{[L]}.
\]
Furthermore, since all~$K\in\Omega(Y)$ satisfy~$[K]=|Y|$, it follows that
\begin{eqnarray*}
\zeta_\alpha(z) 
& = &
\prod_{Y\in\mathcal{Y}}
\exp
\frac{1}{|\Omega(Y)|}
\sum_{K\in\Omega(Y)}
\sum_{L\in\mathcal{L}_K}
\frac{[K]}{[L]}
z^{[L]}
\\
& = &
\prod_{Y\in\mathcal{Y}}
\exp
\sum_{K\in\Omega(Y)}
\frac{1}{|\Omega(Y)|}
\sum_{L\in\mathcal{L}_K}
\frac{1}{[K:L]}
(z^{[K]})^{[K:L]}
\\
& = &
\prod_{Y\in\mathcal{Y}}
\sum_{K\in\Omega(Y)}
\frac{1}{|\Omega(Y)|}
\zeta_{\tau(Y)}(z^{|Y|}).
\end{eqnarray*}
The required result follows by factoring~$\zeta_{\tau(Y)}(z^{|Y|})$ out from the sum. 
\end{proof}

By now employing the Dirichlet series~$\Delta_L$, defined for each subgroup $L\in\mathcal{L}_G$ by~(\ref{exponent_dirichlet_series}), we are able to prove  Theorem~\ref{explicit_product_theorem}.

\begin{proof}[Proof of Theorem~\ref{explicit_product_theorem}]
First consider the case where~$\tau$ is the trivial action of a group $L\in\mathcal{L}_G$ on a singleton~$Y$.
By the definition of $\Delta_L$, we have $\zeta_L(z)=\zeta(z)\Delta_L(z-1)$ and Dirichlet convolution gives
\begin{equation}\label{dreamed_of_incredible_heights}
a_L(n)=\sum_{d\mid n}d\cdot b_L(d).
\end{equation}
Furthermore, since $\nfix_\tau(L)=1$ for all $L\in\mathcal{L}_G$, using elementary operations on formal power series, we obtain the functional equation,
\[
z\frac{\zeta'_\tau(z)}{\zeta_\tau(z)}
=
\sum_{n\geqslant 1} a_L(n) z^n.
\]
Hence,~(\ref{dreamed_of_incredible_heights}) together with a standard application of Lambert series gives
\[
z\frac{\zeta'_\tau(z)}{\zeta_\tau(z)}
=
\sum_{n\geqslant 1} \sum_{d\mid n}d\cdot b_L(d) z^n
=
\sum_{n\geqslant 1} n\cdot b_L(n) \frac{z^n}{1-z^n}.
\]
Dividing by~$z$ and solving the resulting differential equation, we obtain 
\[
\log\zeta_\tau(z)=-\sum_{n\geqslant 1}b_L(n)\log(1-z^n).
\]
Thus,
\begin{equation}\label{you_dress_yourself_up}
\zeta_\tau(z)=\prod_{n\geqslant 1}(1-z^n)^{-b_L(n)}.
\end{equation}

In the general case, where~$\alpha$ is a $G$-action on a set~$X$ such that~$\nfix_\alpha(L)<\infty$, for all~$L\in\mathcal{L}_G$, the required product formula~(\ref{now_is_the_time}) now follows from~(\ref{you_dress_yourself_up}) and Lemma~\ref{basic_dynamical_zeta_product_lemma}.

To see that the formal power series for~$\zeta_\alpha$ has integer coefficients when~$\Delta_{L}$ has integer coefficients for all~$L\in\mathcal{L}_G$, we simply need to expand the product~(\ref{now_is_the_time}) in terms of geometric series, as Lemma~\ref{moebius_lemma} shows there can be only finitely many orbits of a given cardinality.
\end{proof}

\begin{remark}
For an action generated
by a single invertible transformation, the acting group is $G=\mathbb{Z}$ and all subgroups in $\mathcal{L}_G$ are isomorphic to $\mathbb{Z}$. So, for all finite orbits~$Y$, $\Delta_{L(Y)}(z)=\zeta_{\mathbb{Z}}(z+1)/\zeta(z+1)=1$, and so $b_{L(Y)}(1)=1$ and $b_{L(Y)}(n)=0$, for all $n\geqslant 2$. Consequently, Theorem~\ref{explicit_product_theorem} gives the familiar product formula~(\ref{classical_case_product_formula}) in the case $G=\mathbb{Z}$.
\end{remark}

Examples of the following type have been considered in~\cite{MR1978372} and~\cite{ETS:9315807}. Such examples show that integrality of the coefficients in the formal power series expansion for~$\zeta_\alpha$ cannot be expected in general. We also see how the principal product formula given in~\cite{MR1978372} may be deduced from Lemma~\ref{basic_dynamical_zeta_product_lemma}, by setting $G=\mathbf{D}_\infty$.

\begin{example}
The subgroups of the infinite dihedral group~$\mathbf{D}_\infty$ are described in Example~\ref{block_system_example}. This description gives~$\zeta_{\mathbf{D}_\infty}(z)=2^{-z}\zeta(z)+\zeta(z-1)$ and $\Delta_{\mathbf{D}_\infty}(z)=2^{-1-z}+\zeta(z)/\zeta(z+1)$, which does not have integer coefficients. In principle, the rational exponents in Theorem~\ref{explicit_product_theorem} may be calculated using $\Delta_{\mathbf{D}_\infty}$. However, for this example we appeal to Lemma~\ref{basic_dynamical_zeta_product_lemma} directly. 

Suppose $\alpha$ is an action of $\mathbf{D}_\infty$ on a set $X$ such that that~$\nfix_\alpha(L)<\infty$ for all~$L\in\mathcal{L}_{\mathbf{D}_\infty}$. There are just two isomorphism classes in~$\mathcal{L}_{\mathbf{D}_\infty}$ corresponding to~$\mathbf{D}_\infty$ and~$\mathbb{Z}$ and these partition the orbits of~$\alpha$ into two sets~$\mathcal{Y}_{\mathbf{D}_\infty}$ and~$\mathcal{Y}_{\mathbb{Z}}$ respectively, depending on the isomorphism class of the stabilizers of the elements in the orbit. 
Therefore, upon calculating the individual dynamical zeta functions for the trivial actions of~$\mathbf{D}_\infty$ and~$\mathbb{Z}$ on a point, Lemma~\ref{basic_dynamical_zeta_product_lemma}
gives
\[
\zeta_\alpha(z)=
\left(
\prod_{Y\in\mathcal{Y}_{\mathbf{D}_\infty}}
\frac{1}{\sqrt{1-z^{2|Y|}}}\exp\frac{z^{|Y|}}{1-z^{|Y|}}
\right)
\left(
\prod_{Y\in\mathcal{Y}_{\mathbb{Z}}}\frac{1}{1-z^{|Y|}}
\right),
\]
precisely as in~\cite{MR1978372}.

Notice that the trivial action~$\tau(\mathbf{D}_\infty)$ of the infinite dihedral group on a point has dynamical zeta function  
\begin{equation}\label{zeta_function_of_infinite_dihedral_on_a_point}
\zeta_{\tau(\mathbf{D}_\infty)}(z)=
\exp\sum_{n\geqslant 1}\frac{a_{\mathbf{D}_\infty}(n)}{n}z^n
=
\frac{1}{\sqrt{1-z^2}}\exp\frac{z}{1-z},
\end{equation}
which can be written explicitly as the power series
\[
1 + z + 2z^2 + \frac{8}{3}z^3 + \frac{25}{6}z^4 + \frac{169}{30}z^5 + \frac{361}{45}z^6 + \frac{3364}{315}z^7 +\dots
\]
\end{example}

\begin{remarks}
\noindent
\begin{enumerate}
\item
More generally, orbits may be partitioned similarly to the example above, giving an alternative formulation of the product formula in Lemma~\ref{basic_dynamical_zeta_product_lemma}. That is, if $\mathcal{S}(\alpha)\subset\mathcal{L}_G$ is a set of representatives for the isomorphism classes of stabilizers in $\mathcal{L}_G$ and if for each $L\in\mathcal{S}(\alpha)$,~$\mathcal{Y}_L$ denotes the set of orbits with stabilizers isomorphic to~$L$, then
\begin{equation}\label{alternative_product_formula}
\zeta_\alpha(z)=\prod_{L\in\mathcal{S}(\alpha), Y\in\mathcal{Y}_L}\zeta_{\tau(L)}(z^{|Y|}),
\end{equation}
where $\zeta_{\tau(L)}$ denotes the trivial action of $L$ on a point.
\item
The problem of determining if the Dirichlet series~$\Delta_L$ has integer coefficients for all~$L\in\mathcal{L}_G$ appears to be highly non-trivial in general. On the surface, it requires explicit knowledge of the isomorphism classes in~$\mathcal{L}_G$, although it seems to be a property enjoyed by particular classes of groups. For instance, as a result of the explicit formulae available for~$\zeta_G$, for each nilpotent group~$G$ considered in Section~\ref{full_shift_section},~$\Delta_G$ is seen to have integer coefficients. If this is the case for all nilpotent groups, then Theorem~\ref{explicit_product_theorem} would show that the formal power series for~$\zeta_\alpha$ has integer coefficients for any action~$\alpha$ of a nilpotent group, as every subgroup of a nilpotent group is nilpotent. This issue is discussed further in Section~\ref{concluding_remarks}. Note that the subsequent examples involving planar symmetry groups show that~$\Delta_G$ may have integer coefficients for non-nilpotent~$G$. 
\end{enumerate}
\end{remarks}

\section{Full shifts and related examples}\label{full_shift_section}
Let~$G$ be a finitely generated group, let~$\mathcal{A}$ be a finite alphabet of cardinality $A$, and let~$X=\mathcal{A}^G$. The \emph{full shift} action~$\sigma(G)$ of~$G$ on~$X$ is defined as follows. For each~$x=(x_g)$ and each~$h\in G$, let~$(hx)_g=x_{hg}$. The full shift $\sigma(G)$ on $\mathcal{A}^G$ is an expansive action with respect to any metric compatible with the product topology on $\mathcal{A}^G$.  Furthermore, for each~$L\in\mathcal{L}_G$, we have
\[
\nfix_{\sigma(G)}(L)=A^{[L]}. 
\]
To see this, choose a set of right coset representatives~$K$ for the subgroup~$L$. Each~$g\in G$ can be written uniquely in the form~$\ell k$, where~$\ell\in L$ and~$k\in K$, and~$x\in\fix_{\sigma(G)}(L)$ if and only if for all~$h,\ell\in L$ and all~$k\in K$,~$x_{h\ell k}=x_{\ell k}$, that is, if and only if for all~$h\in L$ and all~$k\in K$,~$x_{hk}=x_{k}$. So, the points of~$\fix_{\sigma(G)}(L)$ are determined by the~$A^{|K|}=A^{[L]}$ free choices for the coordinates~$x_{k}$,~$k\in K$. Thus, if~$\tau(G)$ denotes the trivial action of~$G$ on a point, then using the same method as in the proof of Theorem~\ref{explicit_product_theorem}, we obtain the following refinement for full shifts.

\begin{theorem}\label{full_shifts_theorem}
Let~$G$ be a finitely generated group and let~$\sigma(G)$ be the full shift of~$G$ on~$\mathcal{A}^G$, where~$\mathcal{A}$ is a finite alphabet. Then 
\[
\zeta_{\sigma(G)}(z)
=
\zeta_{\tau(G)}(Az)
=
\prod_{n\geqslant 1}(1-A^n z^n)^{-b_G(n)},
\]
where $A=|\mathcal{A}|$ and the exponents~$b_G(n)\in\mathbb{Q}$, $n\geqslant 1$, are the coefficients of the Dirichlet series~$\Delta_G(z)$ given by~(\ref{exponent_dirichlet_series}). Hence, the formal power series for~$\zeta_{\sigma(G)}(z)$ has integer coefficients provided that the Dirichlet series~$\Delta_G$ has integer coefficients.
\end{theorem}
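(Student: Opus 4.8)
The plan is to reduce the whole statement to the trivial action of $G$ on a point, whose zeta function was already computed during the proof of Theorem~\ref{explicit_product_theorem}. First I would substitute the periodic-point count $\nfix_{\sigma(G)}(L)=A^{[L]}$, established above, directly into the defining series~(\ref{first_action_zeta_equation}):
\[
\zeta_{\sigma(G)}(z)
=
\exp\sum_{L\in\mathcal{L}_G}\frac{A^{[L]}}{[L]}z^{[L]}
=
\exp\sum_{L\in\mathcal{L}_G}\frac{1}{[L]}(Az)^{[L]}.
\]
Since the trivial action $\tau(G)$ of $G$ on a point has $\nfix_{\tau(G)}(L)=1$ for every $L\in\mathcal{L}_G$, the right-hand side is precisely $\zeta_{\tau(G)}(Az)$, which gives the first equality. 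This manipulation is legitimate at the level of formal power series: $G$ is finitely generated, so there are only finitely many subgroups of each index, hence $[L]\to\infty$ forces each coefficient of $z^k$ to receive contributions from only finitely many $L$.

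For the second equality I would invoke equation~(\ref{you_dress_yourself_up}) from the proof of Theorem~\ref{explicit_product_theorem}, which records that for the trivial action $\tau$ of any $L\in\mathcal{L}_G$ on a point one has $\zeta_\tau(z)=\prod_{n\geqslant 1}(1-z^n)^{-b_L(n)}$, the exponents being the coefficients of $\Delta_L$. Taking $L=G$ and replacing $z$ by $Az$ yields
\[
\zeta_{\sigma(G)}(z)=\zeta_{\tau(G)}(Az)=\prod_{n\geqslant 1}(1-A^nz^n)^{-b_G(n)},
\]
as required. One should remark that the product on the right is a well-defined formal power series, since the factor indexed by $n$ has the form $1+\bigo(z^n)$, so only finitely many factors affect any given coefficient.

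For the integrality claim, suppose $\Delta_G$ has integer coefficients, so that $b_G(n)\in\mathbb{Z}$ for all $n$. Each factor $(1-A^nz^n)^{-b_G(n)}$ then lies in $\mathbb{Z}[[z]]$: if $b_G(n)\geqslant 0$ this follows from the generalized binomial expansion $(1-w)^{-m}=\sum_{k\geqslant0}\binom{m+k-1}{k}w^k$ applied with $w=A^nz^n$ and $A^n\in\mathbb{Z}$, while if $b_G(n)<0$ the factor is simply the polynomial $(1-A^nz^n)^{|b_G(n)|}\in\mathbb{Z}[z]$. As the product is locally finite, it lies in $\mathbb{Z}[[z]]$ as well. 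I do not expect any serious obstacle here: the analytic substance was already dispatched when the Lambert-series differential equation was solved in the proof of Theorem~\ref{explicit_product_theorem}, and the only genuinely new observation is that the count $A^{[L]}$ depends on $L$ solely through $[L]$, which is exactly what allows $A$ to be absorbed as a rescaling of the variable $z$.
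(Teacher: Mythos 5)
Your proposal is correct and takes essentially the same route as the paper: the paper likewise substitutes the count $\nfix_{\sigma(G)}(L)=A^{[L]}$ (established just before the theorem) to get $\zeta_{\sigma(G)}(z)=\zeta_{\tau(G)}(Az)$ and then applies the product formula~(\ref{you_dress_yourself_up}) for the trivial action with $L=G$, exactly as you do. Your explicit binomial/local-finiteness justification of the integrality claim merely spells out what the paper leaves to the phrase ``expand the product in terms of geometric series.''
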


In conjunction with Proposition~\ref{natural_boundary_proposition}, we thus obtain the proof of Theorem~\ref{full_shift_natural_boundary_theorem}.

\begin{proof}[Proof of Theorem~\ref{full_shift_natural_boundary_theorem}]
Assuming that~$\Delta_G$ has integer coefficients, the product formula of Theorem~\ref{full_shifts_theorem} shows that the formal power series for~$\zeta_{\sigma(G)}$ has integer coefficients, as a consequence of the formal power series for~$\zeta_{\tau(G)}$ having integer coefficients. In particular, Proposition~\ref{natural_boundary_proposition} implies that~$\zeta_{\tau(G)}$ has the unit circle as a natural boundary, since $\ugr(\tau(G))=0$. Moreover, since $\zeta_{\sigma(G)}(z)=\zeta_{\tau(G)}(Az)$, it follows that the circle $|z|=A^{-1}$ is a natural boundary for~$\zeta_{\sigma(G)}$.
\end{proof}

Table~\ref{examples_with_integer_coefficients} gives some examples where the  formulae available for~$\zeta_G$ (see \cite[Ch.~15]{MR1978431} and \cite[Sec.~4]{MR1710163}) can be used in combination with Theorem~\ref{full_shifts_theorem} to find explicit formulae for~$\zeta_{\sigma(G)}$. These examples are of particular interest because~$\Delta_G$ has integer coefficients in all cases, and hence so too does the formal power series for~$\zeta_{\sigma(G)}$. Here,~$\mathbf{H}$ denotes the discrete Heisenberg group and we have used the Hermann--Mauguin notation~$\mathbf{pg}$,~$\mathbf{pm}$ and~$\mathbf{cm}$ for the examples of planar symmetry groups, which are finite extensions of~$\mathbb{Z}^2$. Theorem~\ref{full_shift_natural_boundary_theorem} applies to all the examples in Table~\ref{examples_with_integer_coefficients}, with the obvious exception of $G=\mathbb{Z}$.

\begin{center}
\begin{table}[h]
\caption{Examples of groups for which~$\Delta_G$ and the formal power series for~$\zeta_{\sigma(G)}(z)$ have integer coefficients.\label{examples_with_integer_coefficients}}
\begin{tabular}{l|l|l}
$G$ & Presentation &~$\Delta_G(z)$\\
\hline
$\mathbb{Z}$ &~$\langle a \rangle$ & 1 \\
$\mathbb{Z}^d$, ~$d\geqslant 2$ &~$\langle a_1,\dots,a_d:[a_j,a_k]=1,1\leqslant j,k\leqslant d\rangle$ & ~$\prod_{k=0}^{d-2}\zeta(z-k)$\\
$\mathbf{pg}$ &~$\langle a,b,c: [a,b]=1, c^2=a, bc=cb^{-1}\rangle$ &~$\zeta(z)$\\
$\mathbf{pm}$ &~$\langle a,b,c: [a,b]=[a,c]=c^2=1, bc=cb^{-1}\rangle$ & ~$(1+2^{-z+1})\zeta(z)$\\
$\mathbf{cm}$ &~$\langle a,b,c: [a,b]=c^2=1, bc=cb^{-1}, ac=cab\rangle$ &~$(1+4^{-z})\zeta(z)$\\ 
$\mathbf{H}$ &~$\langle a,b,c: [a,b]=c, [a,c]=[b,c]=1\rangle$ &~$\dfrac{\zeta(z)\zeta(2z)\zeta(2z-1)}{\zeta(3z)}$ \\
\end{tabular}
\end{table}
\end{center}

We may calculate dynamical zeta functions directly using the final column in Table~\ref{examples_with_integer_coefficients} in combination with Theorem~\ref{full_shifts_theorem}. For a full shift of~$\mathbb{Z}^2$, Lind~\cite{MR1411232} observed that the dynamical zeta function has the form $P(Az)$, where 
\[
P(z)=\prod_{n\geqslant 1}(1-z^{n})^{-1}
\]
is the additive partition function. It is also evident that the full shift of~$\mathbf{pg}$ has the same dynamical zeta function, since
\[
\zeta_\mathbf{pg}(z)=\zeta_{\mathbb{Z}^2}(z)=\zeta(z)\zeta(z-1)
\]
by~\cite[Th.~1.3]{MR1710163}. The formula for $\Delta_\mathbf{pm}$ in Table~\ref{examples_with_integer_coefficients} gives  
\begin{eqnarray*}
\zeta_{\tau(\mathbf{pm})}(z)
& = &
\prod_{n\geqslant 1}(1-z^n)^{-1}(1-z^{2n})^{-2}\\
& = &
P(z)P(z^2)^2,
\end{eqnarray*}
and a similar calculation shows~$\zeta_{\tau(\mathbf{cm})}(z)=P(z)P(z^4)$. Finally, for the discrete Heisenberg group, by using $\Delta_\mathbf{H}$, we find
\[
\zeta_{\tau(\mathbf{H})}(z)=
\prod_{\ell,m,n\geqslant 1}P(z^{\ell^2 m^2n^3})^{m\mu(n)}.
\]
In all cases, $\zeta_{\sigma(\mathbf{pm})}(z)=\zeta_{\tau(\mathbf{pm})}(Az)$.

There is a rich source of examples closely related to full shifts available for groups given in the form of a semi-direct product $G=K\rtimes H$, where $H\leqslant G$ and $K\lhd G$. Since each element of $G$ can be expressed uniquely as a pair~$kh$, with $k\in K$ and $h\in H$, and since $K$ is a normal subgroup, we may define an action of $G$ on the shift space $X=\mathcal{A}^K$ via $(khx)_j=x_{khjh^{-1}}$, where~$x=(x_j)\in X$. Using this construction, the following example illustrates how the subgroup conjugacy class structure in $\mathcal{L}_G$ features significantly in examples only slightly more complicated than full shifts.

\begin{example}\label{projected_full_shift_of_pm}
Consider the shift space~$X=\mathcal{A}^{\mathbb{Z}^2}$ and the action~$\alpha$ of the group~$\mathbf{pm}\cong\mathbb{Z}^2\rtimes\mathbf{C}_2$ on~$X$ generated by
\[
(ax)_{i,j}=x_{i+1,j}\;\;\;
(bx)_{i,j}=x_{i,j+1}\;\;\;
(cx)_{i,j}=x_{i,-j}\;\;\;
\] 
where~$x=(x_{i,j})\in X$. 

As shown in \cite[Sec.~5]{MR1710163}, there are four isomorphism classes of subgroups in~$\mathcal{L}_\mathbf{pm}$ corresponding to~$\mathbf{pm}$,~$\mathbf{pg}$,~$\mathbf{cm}$ and~$\mathbf{p1}\cong\mathbb{Z}^2$. Furthermore, the conjugacy classes of subgroups can be split into families parameterized by these isomorphism classes and pairs of positive integers~$(k,m)$, as summarized in Table~\ref{subgroup_conjugacy_classes_for_pm}. The final family appearing in Table~\ref{subgroup_conjugacy_classes_for_pm} corresponds to the isomorphism class~$\mathbf{p1}$ and requires an additional parameter $0\leqslant j\leqslant\lfloor k/2\rfloor$. The function $\theta_k:\mathbb{Z}\rightarrow \{0,1\}$ appearing for these conjugacy classes is defined by
\[
\theta_k(j)
=
\left\{
\begin{array}{ll}
1 & \mbox{if }j=0,\\
1 & \mbox{if }2\mid k\mbox{ and }j=k/2,\\
0 &\mbox{otherwise},
\end{array}
\right.
\]
and is used to give the appropriate normalizer. For a given pair of positive integers $(k,m)$, there are precisely~$k$ subgroups in $\bigcup_{j=0}^{\lfloor k/2\rfloor}\mathcal{L}_\mathbf{p1}^j(k,m)$, as $\mathcal{L}_\mathbf{p1}^j(k,m)$ is comprised of a conjugate pair whenever $\theta_k(j)=0$, and is a singleton otherwise. 

\begin{center}
\begin{table}[h]
\caption{Subgroup conjugacy classes in~$\mathcal{L}_\mathbf{pm}$ for Example~\ref{projected_full_shift_of_pm}.\label{subgroup_conjugacy_classes_for_pm}}
\begin{tabular}{l|l|l|l|l}
Family & Representative ($L$) &~$[L]$ &~$N_\mathbf{pm}(L)$ &~$[N_\mathbf{pm}(L)]$\\
\hline
$\mathcal{L}_\mathbf{pm}^{1}(k,m)$
&
$\langle a^k,b^{2m},c\rangle$
&
$2km$
&
$\langle a,b^{m},c\rangle$
&
$m$\\
$\mathcal{L}_\mathbf{pm}^{2}(k,m)$
&
$\langle a^k,b^{2m},cb\rangle$
&
$2km$
&
$\langle a,b^{m},cb\rangle$
&
$m$\\
$\mathcal{L}_\mathbf{pm}^{3}(k,m)$
&
$\langle a^k,b^{2m-1},c\rangle$
&
$k(2m-1)$
&
$\langle a,b^{2m-1},c\rangle$
&
$2m-1$\\
$\mathcal{L}_\mathbf{pg}^{1}(k,m)$
&
$\langle a^{2k},b^{2m},ca^k\rangle$
&
$4km$
&
$\langle a,b^{m},c\rangle$
&
$m$\\
$\mathcal{L}_\mathbf{pg}^{2}(k,m)$
&
$\langle a^{2k},b^{2m},ca^kb\rangle$
&
$4km$
&
$\langle a,b^{m},cb\rangle$
&
$m$\\
$\mathcal{L}_\mathbf{pg}^{3}(k,m)$
&
$\langle a^{2k},b^{2m-1},ca^k\rangle$
&
$2k(2m-1)$
&  
$\langle a,b^{2m-1},c\rangle$
&
$2m-1$\\
$\mathcal{L}_\mathbf{cm}^1(k,m)$
&
$\langle a^{k}b^m,b^{2m},c\rangle$
&
$2km$
&
$\langle a,b^m,c\rangle$
&
$m$\\
$\mathcal{L}_\mathbf{cm}^2(k,m)$
&
$\langle a^{k}b^m,b^{2m},cb\rangle$
&
$2km$
&
$\langle a,b^m,cb\rangle$
&
$m$\\
$\mathcal{L}_\mathbf{p1}^j(k,m)$
&
$\langle a^k,a^j b^m\rangle$
&
$2km$
&
$\langle a,b,c^{\theta_k(j)}\rangle$
&
$2-\theta_k(j)$
\\
\end{tabular}
\end{table}
\end{center}

For each subgroup in a given conjugacy class, the number of points fixed by the subgroup is the same. This quantity has been calculated for each representative subgroup~$L$ and is shown in Table~\ref{fixed_points_for_conjugacy_classes}, along with the associated contributions to~$\zeta_\alpha(z)$. 
For example, the contribution from $\mathcal{L}_\mathbf{pm}^1(k,m)$ is 
\begin{eqnarray*}
\exp
\sum_{k,m\geqslant 1}\frac{m\cdot A^{k(m+1)}}{2km}z^{2km}
& = &
\exp\sum_{m\geqslant 1}\frac{1}{2}\log(1-A^{m+1}z^{2m})^{-1}\\
& = &
Q(A;Az^2)^{1/2}.
\end{eqnarray*}
where $A=|\mathcal{A}|$ and $Q(w;z)=\prod_{n\geqslant 1}(1-wz^n)^{-1}$ (so,~$Q$ is the reciprocal of an infinite shifted factorial and $Q(1;z)=P(z)$). Notice that the combined contribution to~$\zeta_\alpha(z)$ from the conjugacy classes corresponding to the isomorphism class of~$\mathbf{pg}$ is~$Q(1;Az^2)^{1/2}$.
Hence, the product of the contributions appearing in the final column of Table~\ref{fixed_points_for_conjugacy_classes} gives
\[
\zeta_\alpha(z) 
=
Q(z^{-1};Az^2) 
Q(A;Az^2)
Q(1;Az^2)^{2}.
\]  

\begin{center}
\begin{table}[h]
\caption{Contributions to~$\zeta_\alpha(z)$\label{fixed_points_for_conjugacy_classes} for Example~\ref{projected_full_shift_of_pm}}
\begin{tabular}{l|l|l}
Class ($\mathcal{L}$) &~$\nfix(L)$,~$L\in\mathcal{L}$ &~$\exp\left(\sum_{L\in\mathcal{L}}\frac{\nfix(L)}{[L]}z^{[L]}\right)$\\
\hline
$\mathcal{L}_\mathbf{pm}^{1}(k,m)$
&
$A^{k(m+1)}$
&
$Q(A;Az^2)^{1/2}$
\\
$\mathcal{L}_\mathbf{pm}^{2}(k,m)$
&  
$A^{km}$
&
$Q(1;Az^2)^{1/2}$
\\
$\mathcal{L}_\mathbf{pm}^{3}(k,m)$
&
$A^{km}$
&
$Q(z^{-1};Az^2)$
\\
$\mathcal{L}_\mathbf{pg}^{1}(k,m)$
&
$A^{2km}$
&
$Q(1;A^2z^4)^{1/4}$
\\
$\mathcal{L}_\mathbf{pg}^{2}(k,m)$
&
$A^{2km}$
&
$Q(1;A^2z^4)^{1/4}$
\\
$\mathcal{L}_\mathbf{pg}^{3}(k,m)$
&
$A^{k(2m-1)}$
&
$Q(A^{-1}z^{-2};A^2z^4)^{1/2}$
\\  
$\mathcal{L}_\mathbf{cm}^1(k,m)$
&
$A^{k(m+1)}$
&
$Q(A;Az^2)^{1/2}$
\\
$\mathcal{L}_\mathbf{cm}^2(k,m)$  
&
$A^{km}$
&
$Q(1;Az^2)^{1/2}$
\\
$\bigcup_{j=0}^{\lfloor k/2\rfloor}\mathcal{L}_\mathbf{p1}^j(k,m)$
&
$A^{km}$
&
$Q(1;Az^2)^{1/2}$
\\
\end{tabular}
\end{table}
\end{center}
\end{example}

It is worth noting that as a result of applying the formulae for $\zeta_G$ given in~\cite{MR1710163},  $\zeta_{\sigma(G)}$ appears not to have integer coefficients for all~$13$ remaining planar symmetry groups. For example, for  
\[
\mathbf{p2}=\langle a,b,c: [a,b]=c^2=1,ac=ca^{-1},bc=cb^{-1}\rangle,
\] 
we have $\Delta_{\mathbf{p2}}(z)=\zeta(z)\zeta(z-1)/\zeta(z+1)+2^{-1-z}\zeta(z)$, from which we deduce 
\[
\zeta_{\tau(\mathbf{p2})}(z)=
P(z^2)^{-1/2}\prod_{m,n\geqslant 1}P(z^{m n})^{m\mu(n)/n}.
\]
Hence, $\zeta_{\sigma(\mathbf{p2})}(z)=\zeta_{\tau(\mathbf{p2})}(Az)$ does not have integer coefficients. 

For any action~$\alpha$ of the group~$\mathbf{pm}$ on a set~$X$ with~$|\nfix_\alpha(L)|<\infty$ for all~$L\in\mathcal{L}_G$, the formulae in Table~\ref{examples_with_integer_coefficients} may also be applied in conjunction with~(\ref{alternative_product_formula}) to give a product formula for $\zeta_\alpha$ in terms of orbits. In particular, 
\[
\zeta_\alpha(z)
=
\prod_{Y\in \mathcal{Y}}
P(z^{|Y|})
\prod_{Y\in \mathcal{Y}_\mathbf{cm}}
P(z^{4|Y|})
\prod_{Y\in \mathcal{Y}_\mathbf{pm}}
P(z^{2|Y|})^2,
\]
where~$\mathcal{Y}$ denotes the set of all orbits in~$X$ and~$\mathcal{Y}_\mathbf{pm}, \mathcal{Y}_\mathbf{cm}\subset\mathcal{Y}$ denote the sets of orbits with associated stabilizers that belong to the isomorphism classes corresponding to~$\mathbf{pm}$ and~$\mathbf{cm}$ respectively. 

To contrast with the examples considered thus far, we now consider full shifts and related actions of virtually cyclic groups. It is well known that for a $\mathbb{Z}$-action~$\alpha$ generated by a hyperbolic toral automorphism or by a shift of finite type, $\zeta_\alpha$ is a rational function~\cite{MR0288786},~\cite{MR0271401}. For a $\mathbb{Z}$-action $\alpha$ generated by an ergodic automorphism of a compact abelian group, it is conjectured in~\cite{MR3217030} that if~$\zeta_\alpha$ fails to be rational, then this function is not analytically continuable beyond the circle of convergence, and therefore has a natural boundary. However, this dichotomy is clearly not the case more generally. For example, $\zeta_{\sigma(\mathbf{D}_\infty)}(z)=\zeta_{\tau(\mathbf{D}_\infty)}(Az)$,where $A$ is the cardinality of the alphabet and $\zeta_{\tau(\mathbf{D}_\infty)}$ is given by~(\ref{zeta_function_of_infinite_dihedral_on_a_point}). So,  $\zeta_{\sigma(\mathbf{D}_\infty)}$ is not rational but there is an analytic continuation of $\zeta_{\sigma(\mathbf{D}_\infty)}$ beyond the circle $|z|=A^{-1}$, and~$\zeta_{\sigma(\mathbf{D}_\infty)}$ is in fact holonomic.

The extent to which rational dynamical zeta functions are possible for actions of infinite virtually cyclic groups is not yet clear. Further to Example~\ref{virtually_cyclic_toral_example}, some examples where~$\zeta_{\sigma(G)}$ is rational are given in Table~\ref{examples_with_rational_zeta}.  In each case, $G$ is the direct product of~$\mathbb{Z}$ with a finite group and $\zeta_G$ may be calculated with aid of Goursat's lemma and GAP~\cite{GAP4}; this subsequently enables the calculation of~$\Delta_G$, and~$\zeta_{\sigma(G)}$ is then obtained using Theorem~\ref{full_shifts_theorem}. Here,~$\mathbf{D}_8$ is the dihedral group with~$8$ elements,~$\mathbf{UT}(3,3)$ is the group of unitriangular matrices over~$\mathbb{F}_3$, and~$p$ is any rational prime (whereby, we obtain infinitely many virtually cyclic groups of the form~$G=\mathbb{Z}\times(\mathbb{Z}/p\mathbb{Z})$ for which $\zeta_{\sigma(G)}$ is rational). 
\begin{center}
\begin{table}[h]
\caption{Examples where~$\zeta_{\sigma(G)}(z)=\zeta_{\tau}(Az)$ is rational.\label{examples_with_rational_zeta}}
\begin{tabular}{l|l|l}
$G$ & $\Delta_G(z)$ & $\zeta_{\tau}(z)$\\
\hline
$\mathbb{Z}\times(\mathbb{Z}/p\mathbb{Z})$ & 
$1+\frac{1}{p^z}$ & 
$(1-z)^{-1}(1-z^p)^{-1}$\\
$\mathbb{Z}\times\mathbf{D}_8$ &   
$1+\frac{3}{2^z}+\frac{3}{4^z}+\frac{1}{8^z}$ & 
$(1-z)^{-1}(1-z^2)^{-3}(1-z^4)^{-3}(1-z^8)^{-1}$\\
$\mathbb{Z}\times\mathbf{UT}(3,3)$ & $1+\frac{4}{3^z}+\frac{5}{9^z}+\frac{1}{27^z}$ &
$(1-z)^{-1}(1-z^3)^{-4}(1-z^9)^{-5}(1-z^{27})^{-1}$\\
\end{tabular}
\end{table}
\end{center}

We conclude by using variations on full shifts to show that bounds analogous to those obtained by Bowen~(\ref{bowens_inequalities}) should not be expected for expansive actions of virtually cyclic groups in general. 
In particular, there is no tight connection between the radius of convergence of $\zeta_\alpha$ and the topological entropy. Additionally, as well as giving a concrete instance of the the following proposition, the final example provides another rich source of actions related to full shifts for which~$\zeta_\alpha$ is rational.

\begin{proposition}\label{virtually_cyclic_proposition}
Let $G$ be an infinite virtually cyclic group. If $G$ is neither cyclic nor isomorphic to the infinite dihedral group, then there exists an expansive action $\alpha$ of $G$ by continuous automorphisms of a compact abelian group such that $\ugr(\alpha)>\entropy(\alpha)>0$.
\end{proposition}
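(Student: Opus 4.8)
The structure theory of virtually cyclic groups tells us that an infinite virtually cyclic group $G$ is either of the form $F \rtimes \mathbb{Z}$ (finite-by-cyclic) or an amalgam of two finite groups over a common index-$2$ subgroup (infinite-dihedral-like). To cover both cases while excluding $\mathbb{Z}$ and $\mathbf{D}_\infty$, the plan is to exploit the presence of a non-trivial finite normal subgroup (in the first case) or finite subgroups acting non-trivially (in the second), and to engineer an algebraic action where the periodic point count over finite-index subgroups depends on their index in an uneven way—so that $\ugr(\alpha)$, which is driven by the subgroups of smallest "effective" period, exceeds the entropy $\entropy(\alpha)$, which for an algebraic action equals the Mahler measure / exponential growth rate along the cyclic direction. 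The cleanest construction mimics Example~\ref{virtually_cyclic_toral_example} and the final example alluded to before the proposition: take a suitable finite-dimensional torus or solenoid $X$ on which $\mathbb{Z}$ acts by a hyperbolic (or ergodic) automorphism $A$ with entropy $\log\mahler(A) = \entropy(\alpha) > 0$, and let the finite part of $G$ act by an automorphism $B$ commuting appropriately with $A$, chosen so that the subspaces fixed by the various finite subgroups have dimensions arranged to inflate the periodic point counts for infinitely many indices beyond what the global entropy predicts.

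First I would reduce to a normal form: write $G$ with a finite-index cyclic subgroup $\mathbb{Z} = \langle a \rangle \leqslant G$ and a finite transversal, and split into the two cases above. In the finite-by-cyclic case $G \cong F \rtimes_\phi \mathbb{Z}$ with $F$ non-trivial, I would build a module $M = \laurent^{k} \otimes (\text{something carrying an } F\text{-action})$ so that $X = \widehat{M}$ is a compact abelian group with $G$-action $\alpha$ by continuous automorphisms; expansiveness follows from choosing $M$ so that the corresponding $\laurent$-module has no roots of unity among the eigenvalues of $a$ (equivalently, $a$ acts ergodically/hyperbolically), so that $\fix_\alpha(L)$ is finite for all $L \in \mathcal{L}_G$. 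The entropy computation gives $\entropy(\alpha) = \logmahler$ of the characteristic polynomial of $a$ on the $\mathbb{Z}$-direction, via the Yuzvinskii/Lind–Ward formula for algebraic $\mathbb{Z}$-actions, and this is strictly positive by the hyperbolicity choice.

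Second, and this is the crux, I would compute $\nfix_\alpha(L)$ for the finite-index subgroups $L$ and extract $\ugr(\alpha) = \limsup_{[L]\to\infty} \frac{1}{[L]}\log \nfix_\alpha(L)$. The point is that among the subgroups of index $n$, some contain $F$ (or a large chunk of it) and some do not; for the latter, the fixed-point set over $L$ is cut out by both an $a^j$-equation ($j = n/|F \cap \text{stuff}|$, roughly $n/|F|$) and a $B$-equation, so that $\nfix_\alpha(L)$ behaves like $|\det(A^{j} - (\text{finite-order matrix}))|$, which grows like $\mahler(A)^{n/|F|}$ on the relevant eigenspace—giving a contribution $\frac{1}{n}\log \approx \frac{\log \mahler(A)}{|F|}$—while the relevant eigenspace can be made to be the *full* ambient space by choosing $B$ to act as the identity on a large-dimensional complement. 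The arithmetic to make $\ugr(\alpha)$ strictly exceed $\entropy(\alpha)$ is to arrange that the subgroups realising the $\limsup$ have index $n$ but "effective period" $n/|F|$ with $|F| \geqslant 2$, acting on a torus whose *full* dimension contributes to the determinant — exactly as in Example~\ref{virtually_cyclic_toral_example}, where $\ugr = \tfrac13 \log \lambda_1$ with $\lambda_1 = |\omega_1|^2$ coming from a $4$-dimensional torus with $G = \mathbb{Z} \times \mathbf{C}_3$, whereas the entropy along the $\mathbb{Z}$-direction is $\log|\omega_1| < \tfrac13\log|\omega_1|^2$ is false in general but can be forced by raising the dimension; the honest comparison is $\ugr(\alpha) \geqslant \frac{1}{|F|} \cdot (\dim X) \cdot \log(\text{largest eigenvalue modulus})$ against $\entropy(\alpha) = \sum \log^+(\text{eigenvalue moduli of } a)$, and choosing $A$ with a single dominant eigenvalue of large modulus and the rest inside the unit circle makes $\ugr$ win by a factor related to $|F|$. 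For the infinite-dihedral-like case (but $G \not\cong \mathbf{D}_\infty$, so one of the two amalgamated finite groups has order $\geqslant 3$), the same mechanism applies with $a$ the hyperbolic generator of the index-$2$ cyclic subgroup and the extra finite symmetry pinning down fixed points on a positive-dimensional subtorus.

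The main obstacle is the second step: one must choose the commuting pair $(A, B)$ — equivalently the $G$-module structure — so that (i) $a$ acts ergodically and hyperbolically (for expansiveness, finiteness of $\nfix_\alpha(L)$, and positive entropy), (ii) the finite part $B$ genuinely acts, with order $\geqslant 3$ in the excluded-$\mathbf{D}_\infty$ sense, so that infinitely many finite-index subgroups have "effective period" strictly smaller than their index, and (iii) the dominant-eigenvalue contribution to $\nfix_\alpha(L)$ for those subgroups survives — i.e. $A^j - B^k$ is non-singular and its determinant grows like the top Mahler-measure eigenvalue raised to a power comparable to $[L]$ rather than $[L]/|F|$, which requires the dominant eigenspace of $A$ to lie in the kernel-relevant part rather than being annihilated by a $B$-symmetry. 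Assembling an explicit module, as the paper does for $\mathbb{Z}\times\mathbf{C}_3$ and presumably for the group in the final example, and then verifying the determinant estimates $C_1 \mahler^{cn} \leqslant f_\alpha(n) \leqslant C_2 \mahler^{cn}$ with $c > 1/\entropy(\alpha)$-worth of slack, is where the real work lies; once those bounds are in hand, the Hadamard-formula identity radius of convergence $= \exp(-\ugr(\alpha))$ together with $\entropy(\alpha) = \log\mahler(A)$ gives $\ugr(\alpha) > \entropy(\alpha) > 0$ immediately, and one records the explicit rational $\zeta_\alpha$ exactly as in Example~\ref{virtually_cyclic_toral_example} by summing the Lambert-type series over the families of subgroups.
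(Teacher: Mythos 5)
Your reduction via the structure theory of virtually cyclic groups is fine, but the crux of your construction does not work, and the decisive inequality is aimed at the wrong quantity. First, for a $G$-action the topological entropy is not the entropy of the cyclic direction: if $\langle a\rangle\cong\mathbb{Z}$ has finite index in $G$, then $\entropy(\alpha)=\frac{1}{[G:\langle a\rangle]}\,\entropy(\alpha|_{\langle a\rangle})$, so the identity $\entropy(\alpha)=\log\mahler(A)$ you invoke at the end is incorrect for the $G$-action. Second, with that (incorrect) target the strict inequality is in fact unattainable: for any toral or solenoidal construction in which $a$ acts hyperbolically one has
\[
\nfix_\alpha(L)\leqslant\nfix_{\alpha|_{\langle a\rangle}}(L\cap\langle a\rangle)\leqslant C\,\mahler(A)^{[\langle a\rangle:\,L\cap\langle a\rangle]},
\qquad [\langle a\rangle:L\cap\langle a\rangle]\leqslant[G:L],
\]
so $\ugr(\alpha)\leqslant\log\mahler(A)$; Bowen's bound for the cyclic subaction caps $\ugr$ at exactly the quantity you propose to exceed. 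Third, with the correct normalization, your requirement (ii) that the finite part \emph{genuinely} acts works against you: in computations of the type you cite (Example~\ref{virtually_cyclic_toral_example}) the cross terms $|\det(A^j-B^k)|$ grow like $\mahler(A)^{j}$ with $j\approx[G:L]/[G:\langle a\rangle]$, which yields $\ugr(\alpha)=\entropy(\alpha)$ rather than a strict gap; you never exhibit a module for which the claimed gap survives, and you acknowledge yourself that the determinant estimates are left undone. So the proposal, as it stands, has a genuine gap precisely at the step that matters.

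The paper's proof is much simpler and goes the opposite way: it makes the finite kernel act \emph{trivially}. Since $G$ is infinite and virtually cyclic there is a surjection $\phi:G\rightarrow H$ with $H$ infinite cyclic or $H\cong\mathbf{D}_\infty$, and $K=\ker\phi$ is finite and non-trivial because $G$ is neither cyclic nor $\mathbf{D}_\infty$. Pulling back the full shift on $\mathcal{A}^H$ through $\phi$ (with $\mathcal{A}$ a finite abelian group, so the shift is by continuous automorphisms and is expansive) gives $\entropy(\alpha)=\frac{1}{|K|}\log|\mathcal{A}|$, while every finite index subgroup $L\geqslant K$ satisfies $\nfix_\alpha(L)=|\mathcal{A}|^{[H:\phi(L)]}=|\mathcal{A}|^{[G:L]}$, whence $\ugr(\alpha)\geqslant\log|\mathcal{A}|>\frac{1}{|K|}\log|\mathcal{A}|=\entropy(\alpha)>0$. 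If you wish to salvage a toral version of your idea, the analogous move is to let the finite normal subgroup act trivially so that the action factors through $H$ and infinitely many subgroups have periodic point counts at full index, not to make the finite part act faithfully and fight the eigenvalue arithmetic.
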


\begin{proof}
Since $G$ is infinite and virtually cyclic, there exists a surjective homomorphism $\phi:G\rightarrow H$, where $H$ is either infinite cyclic or isomorphic to~$\mathbf{D}_\infty$ (see, for example,~\cite[Ch.~2]{MR3112976}). By assumption, $K=\ker(\phi)$ is non-trivial. Furthermore, we may define an expansive action~$\alpha$ of $G$ on the shift space~$\mathcal{A}^H$ via
\begin{equation}\label{projected_action}
(gx)_h=x_{\phi(g)h},
\end{equation}
where $\mathcal{A}$ is a finite alphabet and $x=(x_{h})\in\mathcal{A}^H$. A standard entropy calculation shows that this $G$-action has topological entropy $\entropy(\alpha)=\frac{1}{|K|}\log A$, where $A=|\mathcal{A}|$.

Now consider the sublattice $\mathcal{K}\subset\mathcal{L}_G$ comprised of finite index subgroups of $G$ containing~$K$. Via the correspondence theorem, every element $L\in\mathcal{K}$ satisfies $[G:L]=[H:\phi(L)]$. Furthermore, for all $L\in\mathcal{K}$
\[
\nfix_\alpha(L)=A^{[H:\phi(L)]}=A^{[G:L]},
\]
so $\ugr(\alpha)\geqslant\log A$.
\end{proof}

\begin{example}
Let $G=\mathbb{Z}\times(\mathbb{Z}/p\mathbb{Z})$, where $p$ is a rational prime. Since the full shift of $G$ has
\[
\zeta_{\sigma(G)}(z)=(1-Az)^{-1}(1-A^pz^p)^{-1},
\]
where $A$ is the cardinality of the alphabet, it follows that $\ugr(\sigma(G))=\log A=\entropy(\sigma(G))$. 

The description of~$\mathcal{L}_G$ given in Example~\ref{virtually_cyclic_toral_example} allows us to calculate~$\zeta_\alpha$ for the expansive action $\alpha$ of $G$ defined by~(\ref{projected_action}) with $\phi:G\rightarrow\mathbb{Z}$ being the natural coordinate projection. Notice that $\phi(L(n,k))=\langle n \rangle$ when $k=0$, and 
\[
\phi(L(n,k))=\langle \gcd(n,kn/p)\rangle=\langle n/p\rangle
\]
when $1\leqslant k\leqslant p-1$. Furthermore, for all $L\in\mathcal{L}_d$,  
$\nfix_\alpha(L)=A^{[\mathbb{Z}:\phi(L)]}$. Therefore, if $p\nmid n$,  
\[
\sum_{L\in\mathcal{L}_G(n)}\nfix_\alpha(L) = \nfix_\alpha(L(n,0)) = A^n.
\]
Whilst, if $p\mid n$,  
\begin{eqnarray*}
\sum_{L\in\mathcal{L}_G(n)}\nfix_\alpha(L)
& = &
\nfix_\alpha(L(n,0))+\nfix_\alpha(L(n))+\sum_{k=1}^{p-1}\nfix_\alpha(L(n,k))\\
& = &
A^n +
A^{n/p}+
\sum_{k=1}^{p-1}A^{n/p}\\
& = &
A^n+pA^{n/p}.
\end{eqnarray*}
Hence,
\[
\zeta_\alpha(z)=(1-Az)^{-1}(1-Az^p)^{-1}
\]
and so $\zeta_\alpha$ is once again rational. This time, however, $\ugr(\alpha)=\log A$, yet $\entropy(\alpha)=\frac{1}{p}\log A$. 
\end{example}

\section{Concluding Questions}\label{concluding_remarks} 

\noindent
\begin{enumerate}
\item It would be interesting to know if~$\Delta_G$ has integer coefficients whenever $G$ is nilpotent. If $G$ is nilpotent, as shown in~\cite[Ch.~15]{MR1978431}, the arithmetic function~$a_G$ is multiplicative and there is a product formula of the form $\zeta_G=\prod_{p}\zeta_{G,p}$, where $\zeta_{G,p}(z)=\sum_{i=0}^{\infty}a_G(p^i)p^{-iz}$, and~$p$ runs through all rational primes. This means that the question of whether or not~$\Delta_G$ has integer coefficients may be considered locally, as the Euler product for~$1/\zeta$ enables us to write $\Delta_G=\prod_{p}\Delta_{G,p}$, where 
\[
\Delta_{G,p}=(1-p^{-1-z})\zeta_{G,p}(z+1).
\]
Consequently,~$\Delta_G$ has integer coefficients if $p^i\mid a_G(p^i)-a_G(p^{i-1})$ for all~$i\geqslant 1$ and all primes $p$.
\item For which groups~$G$ is the dynamical zeta function rational for all expansive $G$-actions by automorphisms of compact abelian groups? Is it necessary for $G$ to be virtually cyclic? Note that the dynamical zeta function~$\zeta_{\sigma(\mathbf{D}_\infty)}$ shows that this condition is certainly not sufficient. 
\item For a fixed acting group~$G$, is it possible to have both rational and irrational zeta functions for expansive $G$-actions by automorphisms of infinite compact abelian groups? 
\item What dynamical information is encapsulated by the function  
\[
\zeta^{\lhd}_{\alpha}(z)=\exp\sum_{L\in\mathcal{L}^{\lhd}_G}\frac{\nfix_\alpha(L)}{[L]}z^{[L]},
\]
where $\mathcal{L}^{\lhd}_G\subset\mathcal{L}_G$ is the sublattice of normal subgroups of finite index? In light of the many available results for $\zeta^{\lhd}_G$, this also seems to be a very natural object to study. 
\end{enumerate}

\bibliographystyle{plain}

\begin{thebibliography}{10}

\bibitem{MR0176482}
M.~Artin and B.~Mazur.
\newblock On periodic points.
\newblock {\em Ann. of Math. (2)}, 81:82--99, 1965.

\bibitem{MR3025123}
J.~Ban, W.~Hu, S.~Lin, and Y.~Lin.
\newblock Zeta functions for two-dimensional shifts of finite type.
\newblock {\em Mem. Amer. Math. Soc.}, 221(1037):vi+60, 2013.

\bibitem{MR3217030}
J.~Bell, R.~Miles, and T.~Ward.
\newblock Towards a {P}{\'o}lya-{C}arlson dichotomy for algebraic dynamics.
\newblock {\em Indag. Math. (N.S.)}, 25(4):652--668, 2014.

\bibitem{MR0399413}
R.~Bowen.
\newblock Some systems with unique equilibrium states.
\newblock {\em Math. Systems Theory}, 8(3):193--202, 1974/75.

\bibitem{MR0271401}
R.~Bowen and O.~Lanford.
\newblock Zeta functions of restrictions of the shift transformation.
\newblock In {\em Global {A}nalysis ({P}roc. {S}ympos. {P}ure {M}ath., {V}ol.
  {XIV}, {B}erkeley, {C}alif., 1968)}, pages 43--49. Amer. Math. Soc.,
  Providence, R.I., 1970.

\bibitem{MR1925634}
J.~Buzzi.
\newblock Some remarks on random zeta functions.
\newblock {\em Ergodic Theory Dynam. Systems}, 22(4):1031--1040, 2002.

\bibitem{MR1544479}
F.~Carlson.
\newblock {\"U}ber ganzwertige {F}unktionen.
\newblock {\em Math. Z.}, 11(1-2):1--23, 1921.

\bibitem{MR0245483}
H.~Crapo.
\newblock M{\"o}bius inversion in lattices.
\newblock {\em Arch. Math.}, 19:595--607, 1968.

\bibitem{MR1710163}
M.~du~Sautoy, J.~McDermott, and G.~Smith.
\newblock Zeta functions of crystallographic groups and analytic continuation.
\newblock {\em Proc. London Math. Soc. (3)}, 79(3):511--534, 1999.

\bibitem{MR2371185}
M.~du~Sautoy and L.~Woodward.
\newblock {\em Zeta functions of groups and rings}, volume 1925 of {\em Lecture
  Notes in Mathematics}.
\newblock Springer-Verlag, Berlin, 2008.

\bibitem{GAP4}
The GAP~Group.
\newblock {\em {GAP -- Groups, Algorithms, and Programming, Version 4.7.8}},
  2015.

\bibitem{MR943928}
F.~Grunewald, D.~Segal, and G.~Smith.
\newblock Subgroups of finite index in nilpotent groups.
\newblock {\em Invent. Math.}, 93(1):185--223, 1988.

\bibitem{MR1978372}
Y.~Kim, J.~Lee, and K.~Park.
\newblock A zeta function for flip systems.
\newblock {\em Pacific J. Math.}, 209(2):289--301, 2003.

\bibitem{MR2013092}
K.~Kimoto.
\newblock Zetas and moments of finite group actions.
\newblock {\em Proc. Japan Acad. Ser. A Math. Sci.}, 79(8):128--133, 2003.

\bibitem{MR761806}
C.~Kratzer and J.~Th{\'e}venaz.
\newblock Fonction de {M}{\"o}bius d'un groupe fini et anneau de {B}urnside.
\newblock {\em Comment. Math. Helv.}, 59(3):425--438, 1984.

\bibitem{MR3112976}
D.~{Lima Gon\c{c}alves} and J.~Guaschi.
\newblock {\em The classification of the virtually cyclic subgroups of the
  sphere braid groups}.
\newblock Springer Briefs in Mathematics. Springer, Cham, 2013.

\bibitem{MR684244}
D.~Lind.
\newblock Dynamical properties of quasihyperbolic toral automorphisms.
\newblock {\em Ergodic Theory Dynamical Systems}, 2(1):49--68, 1982.

\bibitem{MR1411232}
D.~Lind.
\newblock A zeta function for {${\mathbb Z}^d$}-actions.
\newblock In {\em Ergodic theory of {${\mathbb Z}^d$}-actions ({W}arwick,
  1993--1994)}, volume 228 of {\em London Math. Soc. Lecture Note Ser.}, pages
  433--450. Cambridge Univ. Press, Cambridge, 1996.

\bibitem{MR1978431}
A.~Lubotzky and D.~Segal.
\newblock {\em Subgroup growth}, volume 212 of {\em Progress in Mathematics}.
\newblock Birkh{\"a}user Verlag, Basel, 2003.

\bibitem{MR0288786}
A.~Manning.
\newblock Axiom {${\rm A}$} diffeomorphisms have rational zeta functions.
\newblock {\em Bull. London Math. Soc.}, 3:215--220, 1971.

\bibitem{MR2035655}
G.~Margulis.
\newblock {\em On some aspects of the theory of {A}nosov systems}.
\newblock Springer Monographs in Mathematics. Springer-Verlag, Berlin, 2004.
\newblock With a survey by R. Sharp: Periodic orbits of hyperbolic flows,
  Translated from the Russian by Valentina Vladimirovna Szulikowska.

\bibitem{MR3336617}
R.~Miles.
\newblock A natural boundary for the dynamical zeta function for commuting
  group automorphisms.
\newblock {\em Proc. Amer. Math. Soc.}, 143(7):2927--2933, 2015.

\bibitem{ETS:9315807}
R.~Miles.
\newblock Orbit growth for algebraic flip systems.
\newblock {\em Ergodic Theory and Dynamical Systems}, FirstView:1--19, 4 2015.

\bibitem{MR2465676}
R.~Miles and T.~Ward.
\newblock Orbit-counting for nilpotent group shifts.
\newblock {\em Proc. Amer. Math. Soc.}, 137(4):1499--1507, 2009.

\bibitem{miles_ward_zero_dimensional}
R.~Miles and T.~Ward.
\newblock The dynamical zeta function for commuting automorphisms of
  zero-dimensional groups.
\newblock preprint, 2015.

\bibitem{MR710244}
W.~Parry.
\newblock An analogue of the prime number theorem for closed orbits of shifts
  of finite type and their suspensions.
\newblock {\em Israel J. Math.}, 45(1):41--52, 1983.

\bibitem{MR727704}
W.~Parry and M.~Pollicott.
\newblock An analogue of the prime number theorem for closed orbits of {A}xiom
  {A} flows.
\newblock {\em Ann. of Math. (2)}, 118(3):573--591, 1983.

\bibitem{MR1512473}
G.~P{\'o}lya.
\newblock {\"U}ber gewisse notwendige {D}eterminantenkriterien f{\"u}r die
  {F}ortsetzbarkeit einer {P}otenzreihe.
\newblock {\em Math. Ann.}, 99(1):687--706, 1928.

\bibitem{MR1920859}
D.~Ruelle.
\newblock Dynamical zeta functions and transfer operators.
\newblock {\em Notices Amer. Math. Soc.}, 49(8):887--895, 2002.

\bibitem{MR1345152}
K.~Schmidt.
\newblock {\em Dynamical systems of algebraic origin}, volume 128 of {\em
  Progress in Mathematics}.
\newblock Birkh{\"a}user Verlag, Basel, 1995.

\bibitem{MR2376066}
S.~L. Segal.
\newblock {\em Nine introductions in complex analysis}, volume 208 of {\em
  North-Holland Mathematics Studies}.
\newblock Elsevier Science B.V., Amsterdam, revised edition, 2008.

\bibitem{MR1475693}
A.~Shalev.
\newblock On the degree of groups of polynomial subgroup growth.
\newblock {\em Trans. Amer. Math. Soc.}, 351(9):3793--3822, 1999.

\bibitem{MR0228014}
S.~Smale.
\newblock Differentiable dynamical systems.
\newblock {\em Bull. Amer. Math. Soc.}, 73:747--817, 1967.

\bibitem{MR1442260}
R.~P. Stanley.
\newblock {\em Enumerative combinatorics. {V}ol. 1}, volume~49 of {\em
  Cambridge Studies in Advanced Mathematics}.
\newblock Cambridge University Press, Cambridge, 1997.
\newblock With a foreword by Gian-Carlo Rota, Corrected reprint of the 1986
  original.

\bibitem{MR2768284}
A.~Terras.
\newblock {\em Zeta functions of graphs}, volume 128 of {\em Cambridge Studies
  in Advanced Mathematics}.
\newblock Cambridge University Press, Cambridge, 2011.
\newblock A stroll through the garden.

\bibitem{MR1139101}
S.~Waddington.
\newblock The prime orbit theorem for quasihyperbolic toral automorphisms.
\newblock {\em Monatsh. Math.}, 112(3):235--248, 1991.

\end{thebibliography}

\end{document}